\def\e{\varepsilon}
\def\o{{\omega}}
\def\O{{\mathcal{O}}}
\def\P{{\mathbb{P}}}
\def\Q{{\mathbb{Q}}} 
\def\Z{{\mathbb{Z}}}
\newtheorem{thm}{Theorem}[section] 
\newtheorem{cor}[thm]{Corollary}
\newtheorem{prop}[thm]{Proposition}
\newtheorem{lem}[thm]{Lemma}
\theoremstyle{definition} 
\newtheorem{defn}[thm]{Definition}
\newtheorem{eg}[thm]{Example} 
\theoremstyle{remark}
\newtheorem{rem}[thm]{Remark}
\newtheorem{ques}[thm]{Question}
\newtheorem{notation}[thm]{Notation}
\newtheorem*{ack}{Acknowledgements}
\title{Nef anti-canonical divisors and rationally connected fibrations}
\author{Sho Ejiri}\author{Yoshinori Gongyo}
\address{Department of Mathematics, Graduate School of Science, Osaka University Toyonaka, Osaka 560-0043, Japan.}
\email{s-ejiri@cr.math.sci.osaka-u.ac.jp}
\address{Graduate School of Mathematical Sciences, the University of Tokyo, 3-8-1 Komaba, Meguro-ku, Tokyo 153-8914, Japan.}
\email{gongyo@ms.u-tokyo.ac.jp}
\date{\today}
\subjclass[2010]{14D06, 14M22}
\keywords{anti-canonical divisor, MRC fibration}
\begin{document}
\tolerance = 9999
\maketitle
\begin{abstract}
We study the Iitaka--Kodaira dimension of nef relative anti-canonical divisors. 
As a consequence, we prove that given a complex projective variety with klt singularities, 
if the anti-canonical divisor is nef, then the dimension of a general fibre 
of the maximal rationally connected fibration is at least the Iitaka--Kodaira dimension of the anti-canonical divisor.
\end{abstract}
\section{Introduction} \label{section:intro} %%%%%%%%%%%%%%%%%%%%%%%%%%%%%%%%%%%%%%
The positivity of the anti-canonical class of a projective variety is an important notion 
that enables us to find certain geometric features of the variety.
In characteristic zero, 
Boucksom--Demailly--P\v{a}un--Peternell~\cite{BDPP} proved 
that a projective manifold is uniruled if its canonical class is not pseudo-effective. 
Koll\'ar--Miyaoka--Mori~\cite{KMM92} showed that 
a Fano manifold is rationally connected (see also~\cite{Cam92}).
This result was generalised by Hacon--M\textsuperscript cKernan~\cite{HM07} and Zhang~\cite{Zhang-rc}
to the case of klt log Fano pairs, that is, projective klt pairs with ample anti-log canonical class. 
In view of these results, it is natural to ask if there are other positivity conditions of anti-canonical classes
from which we can derive geometric consequences similar to those above.

In this article, we study the Iitaka--Kodaira dimension of nef (relative) anti-canonical divisors, 
and prove the following theorem: 
\begin{thm}\label{thm-intro:hm-inequality}
Let $(X,\Delta)$ be a projective klt pair over a field of characteristic zero 
and $r: X \dashrightarrow W$ the maximal rationally chain connected fibration.
Suppose that $-(K_X+\Delta)$ is nef. 
Then $\kappa(X, -(K_X+\Delta)) \leq \kappa (F, -(K_F+\Delta_F))$. 
Here, $F$ is a general fibre of $r$ and $K_F+\Delta_F=(K_X+\Delta)|_{F}$. 
In particular, it holds that 
\begin{align}
\kappa(X, -(K_X+\Delta)) \leq \mathrm{dim}\,X-\mathrm{dim}\,W. \label{inequality-intro:hm}
\end{align}
\end{thm}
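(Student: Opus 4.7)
The plan is to reduce the theorem to the case where the maximal rationally connected fibration is realised by a morphism, and then to invoke the main technical results of the paper on relative anti-canonical divisors. Concretely, I would begin by taking a common log resolution: choose a smooth projective birational model $\pi \colon X' \to X$ together with a morphism $f \colon X' \to W'$ to a smooth birational model of $W$ such that $f$ resolves $r \circ \pi$. Pulling back $\Delta$ via the crepant formula yields a boundary $\Delta'$ on $X'$ with $(X', \Delta')$ klt and $\kappa(X', -(K_{X'}+\Delta')) = \kappa(X, -(K_X+\Delta))$, so it suffices to work with $(X', \Delta')$ and $f$.

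Next, a general fibre $F$ of $f$ is smooth, projective, and rationally chain connected, hence rationally connected (in characteristic zero). By Graber--Harris--Starr, $W'$ is therefore not uniruled, and by Boucksom--Demailly--P\v{a}un--Peternell the canonical class $K_{W'}$ is pseudo-effective.

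The core of the argument is then to establish
\[
\kappa\bigl(X', -(K_{X'}+\Delta')\bigr) \leq \kappa\bigl(F, -(K_F+\Delta_F)\bigr).
\]
My strategy is to decompose $-(K_{X'}+\Delta') = -(K_{X'/W'}+\Delta') - f^*K_{W'}$ and to study the direct image sheaves $\mathcal{F}_m := f_*\mathcal{O}_{X'}(\lfloor -m(K_{X'}+\Delta')\rfloor)$ for $m$ sufficiently divisible. The generic rank of $\mathcal{F}_m$ equals $h^0(F, -m(K_F+\Delta_F))$, which grows like $m^{\kappa(F,\,-(K_F+\Delta_F))}$. The goal is to show that $h^0(W', \mathcal{F}_m)$ grows at the same rate, up to a polynomial factor of lower order; informally, that $\mathcal{F}_m$ is ``numerically almost flat''. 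Nefness of $-(K_{X'}+\Delta')$ supplies $\mathcal{F}_m$ with Viehweg-type weak positivity, while the pseudo-effectivity of $K_{W'}$ blocks any extra positivity coming from the base and forces $\det \mathcal{F}_m$ to have controlled Iitaka dimension. Combining these two inputs should yield the desired bound on $h^0(W', \mathcal{F}_m)$.

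The main obstacle is turning this ``numerically almost flat'' heuristic into a rigorous bound; this is where the positivity-of-direct-image machinery developed earlier in the paper for nef relative anti-canonical divisors enters, interacting delicately with the pseudo-effectivity of $K_{W'}$ to cancel any contribution from the base. Once the displayed inequality is proved, the ``in particular'' statement is immediate from the trivial bound $\kappa(F, -(K_F+\Delta_F)) \leq \dim F = \dim X - \dim W$.
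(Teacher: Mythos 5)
Your proposal stalls exactly at the step that carries the whole weight of the theorem. The paper does not argue by estimating the growth of $h^0\bigl(W',\mathcal F_m\bigr)$ for the direct images $\mathcal F_m=f_*\mathcal O_{X'}(\lfloor -m(K_{X'}+\Delta')\rfloor)$; it proves an injectivity statement: for $L=-\pi^*(K_X+\Delta)+g^*K_W$ nef, the restriction $R(X,L)\to R(Z,L|_Z)$ to a suitable fibre is injective (Corollary~\ref{cor:anti_can_ring}, via Theorem~\ref{thm:inj}), which immediately gives $\kappa(X,L)\le\kappa(F,L|_F)$. Your ``numerically almost flat'' heuristic --- weak positivity of $\mathcal F_m$ plus pseudo-effectivity of $K_{W'}$ forces $h^0(W',\mathcal F_m)$ to grow no faster than the generic rank --- is precisely the assertion that would need a proof, and you give none: weak positivity does not cap section growth (an ample $\mathcal F_m$ of the same rank would be weakly positive and have much larger $h^0$), and pseudo-effectivity of $K_{W'}$ by itself does not ``block'' positivity of $\det\mathcal F_m$ or of $\mathcal F_m$ in any way you make precise. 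Deferring this to ``the positivity-of-direct-image machinery developed earlier in the paper'' does not close the gap, because that machinery (Theorem~\ref{thm:sp}, Proposition~\ref{prop:p.e.}, Theorem~\ref{thm:inj}) takes as hypothesis the nefness of the \emph{relative} anti-log-canonical class $-\pi^*(K_X+\Delta)+g^*K_W$, which you never establish: your hypotheses only give nefness of the absolute class $-(K_X+\Delta)$ and pseudo-effectivity of $K_{W'}$.

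This points to the reduction you are missing, which is the actual content of the paper's proof of this theorem. One does not stop at ``$W$ is not uniruled, hence $K_W$ pseudo-effective'': by \cite[Remark 1]{Zha05} the nefness of $-(K_X+\Delta)$ forces the numerical Kodaira dimension $\kappa_\sigma(K_W)$ to be zero, and then by Druel and Nakayama \cite[V, 4.9]{nak} $W$ has a good minimal model, so after replacing $W$ by that model one may assume $W$ has $\Q$-factorial terminal singularities and $K_W\sim_{\Q}0$. Only then does the relative anti-canonical class coincide (up to $\Q$-linear equivalence) with the absolute one, so that $L=-\pi^*(K_X+\Delta)+g^*K_W$ is nef and Corollary~\ref{cor:inequality} applies; note also that the MRC fibration is almost holomorphic, so no auxiliary resolution $f\colon X'\to W'$ with a possibly non-effective crepant boundary is needed --- the paper's Notation~\ref{notation:main} is set up precisely to handle the indeterminacy via $\pi\colon W\to X$ with an exceptional correction. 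Without the step $K_W\sim_{\Q}0$ (or at least numerical triviality), neither the injectivity theorem nor any version of the weak positivity argument in this paper is applicable, so as written your proof does not go through.
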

This theorem can be thought of as a generalisation of 
the above result of Hacon--M\textsuperscript cKernan and  Zhang. 
Furthermore, Theorem \ref{thm-intro:hm-inequality} answers a question posed in~\cite{HM05} 
that asks whether inequality (\ref{inequality-intro:hm}) holds or not.
%
%Note that in the case when $X$ is smooth and $\Delta=0$, 
%Theorem \ref{thm-intro:hm-inequality} follows from the decomposition theorem, 
%which was recently established by Cao--H\"oring \cite{CH17},
%for projective manifolds with nef anti-canonical bundle (see also Section \ref{section:questions}).
 
To prove Theorem \ref{thm-intro:hm-inequality}, we establish an injectivity theorem (Theorem~\ref{thm:inj}). 
A simplified version of the injectivity theorem states the following:
\begin{thm}\label{thm-intro:ejiri-inequality} \samepage
Let $k$ be an algebraically closed field of characteristic zero $($resp. $p>0$$)$. 
Let $f:X\to Y$ be a surjective morphism between smooth projective varieties over $k$ such that $f_*\O_X\cong\O_Y$, 
and let $\Delta$ be an effective $\mathbb Q$-divisor $($resp. $\mathbb Z_{(p)}$$)$ on~$X$. 
Take a general fibre $F$ of $f$ and suppose that $(F,\Delta|_F)$ is klt $($resp. strongly $F$-regular$)$. 
Set $L=-(K_{X/Y}+\Delta)$. If $L$ is nef, then for each $m\in\mathbb Z_{>0}$, the morphism
$$H^0(X,\O_X(\lfloor mL\rfloor))\to H^0(F,\O_F(\lfloor mL|_F\rfloor))$$
induced by restriction is injective. 
In particular, $\kappa(X,L)\le\kappa(F,L|_{F})$ holds. 
\end{thm}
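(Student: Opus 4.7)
The ``in particular'' statement $\kappa(X,L)\le\kappa(F,L|_F)$ follows formally from the injectivity applied to every $m\ge 1$: it yields $h^0(X,\lfloor mL\rfloor)\le h^0(F,\lfloor mL|_F\rfloor)$ for all $m$, and hence the inequality of Iitaka dimensions by their asymptotic definition. So the task is the injectivity itself. Via the short exact sequence
$$0\to\O_X(\lfloor mL\rfloor-F)\to\O_X(\lfloor mL\rfloor)\to\O_F(\lfloor mL|_F\rfloor)\to 0,$$
valid for a general fibre $F$, injectivity amounts to the vanishing $H^0(X,\O_X(\lfloor mL\rfloor-F))=0$. A preliminary reduction to $\dim Y=1$ by restricting $f$ over a general complete intersection curve through $f(F)$ simplifies the geometric picture without disturbing the hypotheses. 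So I would argue by contradiction: suppose some nonzero $s\in H^0(X,\O_X(\lfloor mL\rfloor))$ vanishes along $F$, equivalently some effective $D\sim\lfloor mL\rfloor$ satisfies $D\ge F$.

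In the positive-characteristic case the plan uses the Frobenius morphism. Strong $F$-regularity of $(F,\Delta|_F)$ provides, for every effective divisor $H$ on $F$ and every sufficiently large $e$, a splitting of
$$\O_F\to F^e_*\O_F\bigl(\lceil(p^e-1)(K_F+\Delta|_F)\rceil+H\bigr).$$
The goal is to lift such a fibrewise splitting to a Zariski neighbourhood of $F$ in $X$. Here the nefness of $L=-(K_{X/Y}+\Delta)$ is crucial: it provides the positivity of the relative Frobenius pushforwards $f_*F^e_*\O_X(\cdots)$ needed to globalise the splitting on $F$. Once such an extension is available, combining it with (a Frobenius power of) the section $s$ forces $s$ to vanish in a neighbourhood of $F$, and therefore everywhere on $X$---the desired contradiction. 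The characteristic zero case then follows by standard reduction modulo sufficiently large primes, using that klt of $(F,\Delta|_F)$ descends to strongly $F$-regular after reduction mod $p\gg 0$.

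The principal obstacle is the lifting step itself: globalising the fibrewise Frobenius splittings produced by strong $F$-regularity of $(F,\Delta|_F)$ to splittings on a Zariski neighbourhood of $F$ in $X$. The delicate point is to establish the positivity of the direct image sheaves that mediate the extension, for which the nefness of $-(K_{X/Y}+\Delta)$ is indispensable; this is presumably the technical content of the sharper injectivity theorem alluded to as Theorem~\ref{thm:inj} in the introduction, and of which the statement above is the simplified version.
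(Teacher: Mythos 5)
There is a genuine gap: after the (correct) reduction to showing that no effective divisor in $|\lfloor mL\rfloor|$ can contain the general fibre $F$, the actual mechanism that uses nefness of $L$ is left as an acknowledged ``principal obstacle'' and is never supplied. The plan you sketch --- globalising the fibrewise Frobenius splittings coming from strong $F$-regularity of $(F,\Delta|_F)$ to a Zariski neighbourhood of $F$, and then arguing that a section vanishing on $F$ must vanish near $F$ --- is not carried out, and it is not clear it can be: a splitting of $\O_U\to F^e_*\O_U(\cdots)$ on a neighbourhood $U$ of $F$ does not by itself kill a global section of $\O_X(\lfloor mL\rfloor)$ vanishing along $F$ (note also that the splitting provided by strong $F$-regularity involves $\lceil(p^e-1)\Delta|_F\rceil$ plus an auxiliary divisor, not $\lceil(p^e-1)(K_F+\Delta|_F)\rceil$). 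The paper's route is different: it proves a weak positivity theorem for $f_*\O_X(lm(K_{X/Y}+\Delta^++\Gamma))$ (Campana/Fujino in characteristic zero, Ejiri in characteristic $p$), then, given an effective $\Gamma\equiv L-g^*E$, perturbs $\Delta$ to $\Delta+\e\Gamma$ --- this is exactly where klt/strong $F$-regularity of the fibre is used --- to get pseudo-effectivity of $-\pi_*g^*(K_Y+D+\e E)$ (Proposition~\ref{prop:p.e.}), deduces via an intersection-number/alteration argument that $g^*E$ must be $\pi$-exceptional (Corollary~\ref{cor:exc}), and finally blows up along the fibre so that a hypothetical component of $\Gamma$ containing it produces a non-exceptional $g^*E$, a contradiction (Theorem~\ref{thm:inj}); injectivity of the restriction on section rings is then immediate (Corollary~\ref{cor:anti_can_ring}). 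None of this appears, even in outline, in your argument.

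A second, independent problem is your treatment of characteristic zero ``by standard reduction modulo sufficiently large primes.'' Nefness is not known to be preserved under reduction mod $p$ (even for a single $p\gg0$, let alone a dense set of primes), so the hypothesis $-(K_{X/Y}+\Delta)$ nef does not descend to the reductions; this is precisely why the paper proves the two characteristics in parallel, invoking the characteristic-zero weak positivity theorem of Campana--Fujino rather than reducing to the $F$-regular case. Finally, the preliminary reduction to $\dim Y=1$ by restricting over general complete intersection curves is inessential and would itself need an argument (vanishing of $s$ on $f^{-1}(C)$ for general $C$ through $f(F)$ must be propagated to all of $X$, and the klt/connectedness hypotheses must be checked for $f^{-1}(C)\to C$); the paper avoids it entirely.
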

This theorem answers another question posed in \cite{Eji16} 
that asks if the inequality in the statement of the theorem holds.
 
Theorem~\ref{thm-intro:ejiri-inequality} is actually proved in the case where $f$ is an almost holomorphic map, 
that is, a rational map with proper general fibres (Theorem~\ref{thm:inj}). 
Note that the maximal rationally chain connected fibration of a projective variety is almost holomorphic. 
Under the assumption of Theorem~\ref{thm-intro:hm-inequality}, 
as explained in the proof of Corollary~\ref{cor:hm-inequality}, 
we may assume that $K_W$ is $\Q$-linearly trivial. 
Therefore, the relative canonical class can be identified with the absolute one, 
and we can apply the general version of Theorem~\ref{thm-intro:ejiri-inequality}. 
 
Let us explain briefly the strategy of the proof of Theorem~\ref{thm-intro:ejiri-inequality}. 
The main ingredient of the proof is the so-called weak positivity theorem 
developed by several people including Fujita, Kawamata, and Viehweg~\cite{Fuj78,kawamata,viehweg}. 
We employ the log version of this theorem established in~\cite{Cam04,Eji15,Fuj17,Pat14}, 
which concerns the positivity of the direct image sheaves~$f_*\O_X(m(K_{X/Y}+\Delta))$ 
of log pluricanonical bundles~$\O_X(m(K_{X/Y}+\Delta))$. 
Using this, we first show that given a $\mathbb Q$-divisor~$D$ on $Y$, 
if $-(K_X+\Delta+g^*D)$ is nef, then $-(K_Y+D)$ is pseudo-effective (Theorem~\ref{thm:sp}). 
This can be viewed as a generalisation of Chen--Zhang's theorem~\cite{CZ13}
that answers Demailly--Peternell--Schneider's question. 
Next, we prove a variant of the above result, 
assuming the existence of an effective $\Q$-Cartier divisor~$\Gamma\equiv-(K_X+\Delta+g^*D)$ on $X$ (Proposition~\ref{prop:p.e.}). 
In this step, the pair $(F,\Delta|_F)$ is required to be klt or strongly $F$-regular, 
because we replace $\Delta$ by $\Delta+\e\Gamma$ for some small $\e>0$. 
Finally, applying the above variant, we find that the support of $\Gamma$ cannot contain $F$, from which the assertion follows. 
 
\begin{notation} \label{notation:intro}
Let $k$ be an algebraically closed field. 
By a \textit{variety} we mean an integral separated scheme of finite type over $k$.
For a prime $p$, let $\Z_{(p)}$ denote the localisation of $\Z$ at $(p)=p\Z$.
A $\mathbb Q$-Weil divisor $D$ on a normal variety is said to be \textit{$\mathbb Z_{(p)}$-Weil} (resp. \textit{$\mathbb Z_{(p)}$-Cartier}) 
if there exists an integer $m\in\mathbb Z\setminus p\mathbb Z$ such that $mD$ is integral (resp. Cartier). 
\end{notation} 

\begin{ack}
The first author was supported by JSPS KAKENHI $\#$15J09117.
He wishes to express his gratitude to Professor Shunsuke Takagi, Doctors Takeru Fukuoka, and Akihiro Kanemitsu for helpful comments and discussions. 
The second author was partially supported by JSPS KAKENHI $\#$26707002, 15H03611, 16H02141, and  17H02831. Moreover he is grateful to Professors Hiromichi Takagi, Paolo Cascini, and Doctor Kazunori Yasutake for having  nice discussions and giving  suggestions about Theorem \ref{thm-intro:hm-inequality} from his master studies.  The authors would like to thank Junyan Cao for answering  to our question  related to Question \ref{intro:regularity}. They also thank Professors Osamu Fujino and Shin-ichi Matsumura for stimulating discussions and giving useful  comments. Moreover they would like to thank Doctor Chen Jiang  for discussing Remark \ref{rem:cz}.
\end{ack}

\section{Preliminaries}
In this section, we review the basic terminology of the singularities of pairs and the positivity of coherent sheaves. 
\subsection{Singularities of pairs}
We first recall several singularities that appear in the minimal model programme. 
\begin{defn}[{cf.~\cite[Definition 2.34]{KM}, \cite[Remark 4.2]{schsmith-logfano}}]\label{sing of pairs}
Let $X$ be a normal variety over an algebraically closed field
and $\Delta$ an effective $\mathbb{Q}$-Weil divisor on $X$ such that $K_X+\Delta$ is $\mathbb{Q}$-Cartier. 
Let $\pi: \widetilde{X} \to X$ be a birational morphism from a normal variety $\widetilde{X}$.
Then we can write 
$$K_{\widetilde{X}}=\pi^*(K_X+\Delta)+\sum_{E}a(E, X, \Delta) E,$$ 
where $E$ runs through all the distinct prime divisors on $\widetilde{X}$ and the $a(E, X, \Delta)$ are rational numbers. 
We say that the pair $(X, \Delta)$ is \textit{log canonical} (resp. \textit{klt}) 
if $a(E, X, \Delta) \ge -1$ (resp. $a(E, X, \Delta) >-1$) for every prime divisor $E$ over $X$.  
If $\Delta=0$, we simply say that $X$ has only log canonical singularities (resp. log terminal  singularities). 
Assume that $\Delta=0$. 
We say that $X$ has only \textit{canonical singularities} (resp. \textit{terminal singularities}) 
if $a(E, X, 0) \ge 0$ (resp. $a(E, X, 0) >0$) for every exceptional prime divisor $E$ over $X$. 
\end{defn}
Next, let us discuss singularities of pairs in positive characteristic. 
We recall two notions of singularities defined in terms of splittings of the Frobenius morphisms. 
\begin{defn}\label{defi-gFreg} 
Let $X$ be a normal \textit{affine} variety defined over an algebraically closed field $k$ of characteristic $p>0$ 
and $\Delta$ an effective $\Q$-divisor on $X$. 
\begin{enumerate}[(i)]
\item(\cite{HR,Sch08})
We say that $(X, \Delta)$ is \textit{sharply $F$-pure} if there exists an integer $e \in \mathbb{Z}_{>0}$ for which the composition map 
$$\mathcal{O}_X \to F^e_*\mathcal{O}_X \hookrightarrow F^e_*\mathcal{O}_X(\lceil (p^e-1) \Delta \rceil)$$
of the $e$-times iterated Frobenius map $\mathcal{O}_X \to F^e_*\mathcal{O}_X$ 
with a natural inclusion $F^e_*\mathcal{O}_X \hookrightarrow F^e_*\mathcal{O}_X(\lceil (p^e-1) \Delta \rceil)$
splits as an $\mathcal{O}_X$-module homomorphism. 
\item(\cite{HH,HW02})
We say that $(X, \Delta)$ is \textit{strongly $F$-regular} if for every effective divisor $D$ on $X$, 
there exists an integer $e \in \mathbb{Z}_{>0}$ such that the composition map 
$$\mathcal{O}_X \to F^e_*\mathcal{O}_X \hookrightarrow F^e_*\mathcal{O}_X(\lceil (p^e-1) \Delta \rceil+D)$$
of the $e$-times iterated Frobenius map $\mathcal{O}_X \to F^e_*\mathcal{O}_X$ with a natural inclusion $F^e_*\mathcal{O}_X \hookrightarrow F^e_*\mathcal{O}_X(\lceil (p^e-1) \Delta \rceil+D)$
splits as an $\mathcal{O}_X$-module homomorphism. 
\end{enumerate}
For a variety $X$ and an effective $\mathbb Q$-divisor $\Delta$ on $X$, 
we say that $(X, \Delta)$ is \textit{sharply $F$-pure} 
if there exists an  affine open covering $\{U_i\}$ of $X$ such that 
each $(U_i,\Delta|_{U_i})$ is \textit{sharply $F$-pure}. 
The strong $F$-regularity of $(X,\Delta)$ is defined in the same way. 
When $\Delta=0$, we simply say that $X$ is $F$-pure or strongly $F$-regular. 
\end{defn}

Strongly $F$-regular pairs are known to satisfy some properties similar to those of klt pairs. 
The same is true of the relationship between sharply $F$-pure and lc pairs. 
For this reason, our theorems in characteristic zero and $p>0$ can be proved simultaneously. 

\subsection{Weak positivity}\label{subsection:wp}
In this subsection, we recall the notion of the weak positivity of coherent sheaves, 
which was originally introduced by Viehweg \cite[Definition~1.2]{viehweg}. 
Our definition is slightly different from the original. 
\begin{defn} \label{defn:positivity} 
Let $Y$ be a quasi-projective normal variety over a field $k$, 
let $\mathcal{G}$ be a coherent sheaf on $Y$, and let $H$ be an ample Cartier divisor. 
\begin{itemize} 
\item[$($i$)$] 
We say that $\mathcal{G}$ is \textit{generically globally generated} 
if the natural morphism $H^0(Y,\mathcal{G})\otimes_k\mathcal{O}_Y\to\mathcal{G}$ is surjective over the generic point of $Y$.
\item[$($ii$)$] 
We say that $\mathcal{G}$ is \textit{weakly positive} (or \textit{pseudo-effective}) 
if for every $\alpha\in\mathbb Z_{>0}$, there exists some $\beta\in\mathbb Z_{>0}$ such that 
$(S^{\alpha\beta}\mathcal{G})^{**}\otimes\O_Y(\beta H)$ is generically globally generated. 
Here $S^{\alpha\beta}(\underline{\quad})$ and $(\underline{\quad})^{**}$ denote 
the $\alpha\beta$-th symmetric product and the double dual, respectively.  Note that the weak positivity of $\mathcal{G}$ dose not depend on the choice of ample divisor $H$.
\end{itemize}
\end{defn}
The weak positivity of $\mathcal G$ in the sense of the above definition 
is actually equivalent to the pseudo-effectivity in the sense of \cite[Definition~5.1]{BKKMSU15} 
when $\mathcal G$ is a vector bundle, 
and is equivalent to the usual pseudo-effectivity when $\mathcal G$ is a line bundle. 

We need Lemmas~\ref{lem:wp_pullback} and~\ref{lem:wp_gensurj} below 
in order to prove Theorem~\ref{thm:sp}. 
\begin{lem} \label{lem:wp_pullback}
Let $f:Y'\to Y$ be a projective surjective morphism 
between geometrically normal quasi-projective varieties over a field. 
Let $\mathcal G$ be a torsion-free coherent sheaf on $Y$. 
\begin{itemize}
\item[(1)] $($\cite[Lemma~2.15, 1]{Vie95}$)$ Suppose that there are no $f$-exceptional divisors on $Y'$. 
If $\mathcal G$ is weakly positive, then so is $f^*\mathcal G$. 
\item[(2)] Let $E$ be an effective $f$-exceptional Weil divisor on $\tilde Y$. If $\mathcal O_{Y'}(E) \otimes f^*\mathcal G$ is weakly positive, then so is $\mathcal G$. 
\end{itemize}
\end{lem}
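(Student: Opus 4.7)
Part~(1) is cited from~\cite{Vie95}, so I only need to prove Part~(2). My plan is to apply the weak positivity hypothesis on $Y'$ against a nef-and-big divisor pulled back from $Y$, and then descend the resulting generic global generation via $f_{*}$. For concreteness I sketch the birational case, the general surjective case requiring a similar but more delicate pushforward analysis to account for fiber cohomology.

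First I would rewrite the symmetric powers appearing in the hypothesis as
\[
\bigl(S^{\alpha\beta}(\mathcal{O}_{Y'}(E)\otimes f^{*}\mathcal{G})\bigr)^{**}\;\cong\;\mathcal{O}_{Y'}(\alpha\beta E)\otimes (f^{*}S^{\alpha\beta}\mathcal{G})^{**},
\]
using that $\mathcal{O}_{Y'}(E)$ is reflexive of rank one and that the double dual is insensitive to codimension-two behavior. This recasts the hypothesis as generic global generation of $\mathcal{O}_{Y'}(\alpha\beta E)\otimes (f^{*}S^{\alpha\beta}\mathcal{G})^{**}$ twisted by a multiple of an ample divisor on $Y'$.

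Next, fixing an ample divisor $H$ on $Y$, I would upgrade the hypothesis so that weak positivity can be tested against the nef-and-big divisor $f^{*}H$ on $Y'$ in place of an ample. Kodaira's lemma gives, for any ample $A$ on $Y'$, a positive integer $m$ and an effective divisor $D$ with $mf^{*}H\sim A+D$; the corresponding inclusion $\mathcal{O}_{Y'}(\beta A)\hookrightarrow \mathcal{O}_{Y'}(\beta m f^{*}H)$ is an isomorphism over the generic point and thus transports generic global generation from one twist to the other. After adjusting indices, this produces: for every $\alpha\in\mathbb{Z}_{>0}$ there is $\beta>0$ such that $\mathcal{O}_{Y'}(\alpha\beta E+\beta f^{*}H)\otimes (f^{*}S^{\alpha\beta}\mathcal{G})^{**}$ is generically globally generated on $Y'$.

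Then I would push forward by $f_{*}$. The projection formula for $\mathcal{O}_{Y'}(\beta f^{*}H)=f^{*}\mathcal{O}_{Y}(\beta H)$ strips off the pullback twist, while the identity $f_{*}\mathcal{O}_{Y'}(mE)=\mathcal{O}_{Y}$ for every $m\ge 0$ (Hartogs, since $E$ is $f$-exceptional and $Y$ is normal), combined with the $S_{2}$-property of reflexive sheaves on $Y$, yields
\[
f_{*}\bigl(\mathcal{O}_{Y'}(\alpha\beta E+\beta f^{*}H)\otimes (f^{*}S^{\alpha\beta}\mathcal{G})^{**}\bigr)\;=\;(S^{\alpha\beta}\mathcal{G})^{**}\otimes\mathcal{O}_{Y}(\beta H).
\]
Since $f$ is birational, $k(Y)=k(Y')$ and $H^{0}(Y',\cdot)=H^{0}(Y,f_{*}\cdot)$, so generic global generation upstairs transfers to generic global generation of the pushforward downstairs, establishing the weak positivity of $\mathcal{G}$. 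The hardest step will be this pushforward identification: ensuring that the double duals line up so that the displayed equality holds as reflexive sheaves on all of $Y$ (and not merely a generic agreement) is where the normality of $Y$ and the $f$-exceptionality of $E$ enter essentially; a secondary technical point is the Kodaira-lemma upgrade of the hypothesis, standard but worth recording explicitly.
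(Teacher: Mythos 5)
There is a genuine gap: your argument only covers the birational (more generally, the generically finite) case, but the case where $f$ has positive-dimensional fibres is the real content of part (2), and it is needed in the paper — for instance, immediately after Theorem~\ref{thm:sp} the lemma is applied to the fibration $f\colon X\to Y$ itself (with $\Delta^-=0$) to conclude that $-(K_Y+D)$ is pseudo-effective. In that situation your key reduction breaks down at the Kodaira-lemma step: when $\dim Y'>\dim Y$ the divisor $f^*H$ is nef but \emph{not} big, so you cannot trade the ample twist $A$ on $Y'$ (which is what the weak positivity hypothesis gives you) for a twist pulled back from $Y$. Consequently the pushforward step also fails: $f_*$ of a sheaf twisted by $\beta A$ produces a twist by $f_*\mathcal O_{Y'}(\beta A)$, which is not a bounded multiple of an ample on $Y$, so generic global generation downstairs does not yield weak positivity of $\mathcal G$. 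The difficulty is not ``fiber cohomology'' to be handled by ``a more delicate pushforward analysis''; a different idea is required.

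The paper's proof supplies that idea. After reducing to the case $E=0$ and $\mathcal G$ locally free (by passing to the open set $V\subseteq Y\setminus f(\mathrm{Supp}\,E)$ where $\mathcal G$ is locally free, whose complement has codimension $\ge2$), it treats generically finite $f$ as in \cite[Lemma~2.15, 2]{Vie95}, and in the fibration case it extends the base field to an uncountable one, uses that the diminished base locus $\mathbf B_-(f^*\mathcal G)$ (in the sense of \cite{BKKMSU15}) is a countable union of proper closed subsets, and cuts $Y'$ by $\dim Y'-\dim Y$ very general hyperplanes to obtain $Z\not\subseteq\mathbf B_-(f^*\mathcal G)$ with $f|_Z\colon Z\to Y$ generically finite; then $(f^*\mathcal G)|_Z\cong(f|_Z)^*\mathcal G$ is pseudo-effective and the generically finite case finishes the proof. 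This restriction-to-a-very-general-complete-intersection step is entirely absent from your proposal. Your birational sketch itself is essentially workable (with the usual reflexive-sheaf bookkeeping; note that the right-hand side of your first isomorphism should also carry a double dual, since a tensor product of reflexive sheaves need not be reflexive), but as written the proposal does not prove the lemma in the generality in which it is stated and used.
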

\begin{proof}
We first prove (1). 
Note that if $\mathcal G$ is locally free, 
the assertion is proved in the same way as 
in~\cite[Lemma~2.15]{Vie95}. 
Let $V \subseteq Y$ be the maximum open subset such that 
$\mathcal G|_{V}$ is locally free. 
Then the sheaf $\left( f|_{f^{-1}(V)} \right)^*(\mathcal G|_V)$ is weakly positive 
and is isomorphic to $(f^*\mathcal G)|_{f^{-1}(V)}$. 
We then see that $f^*\mathcal G$ is weakly positive, 
since $\mathrm{codim}_{Y'}(Y'\setminus f^{-1}(V)) \ge 2$ by the assumption. 
Next, we show (2). 
We first consider the case when $E=0$ and $\mathcal G$ is locally free. 
If $f$ is generically finite, 
then the assertion is proved in the same way as 
\cite[Lemma~2.15, 2]{Vie95}. 
Otherwise, we need to recall some definition in~\cite{BKKMSU15}. 
For a vector bundle $\mathcal E$ on a quasi-projective variety $X$, 
we define $\mathbf{Bs}(\mathcal E)$ to be the set of points $x \in X$ 
such that the stalk $\mathcal E_{x}$ is \textit{not} generated by global sections of $\mathcal E$. 
Put
$
\mathbf B_-(\mathcal E)
:=\bigcup_{\alpha \ge1}\bigcap_{\beta \ge1} \mathbf{Bs}\left(
\mathcal (S^{\alpha\beta}\mathcal E) \otimes \mathcal H^{\beta}
\right),
$ 
where $\mathcal H$ is an ample line bundle on $X$. 
Note that $\mathbf B_-(\mathcal E)$ is independent of the choice of $\mathcal H$
and is a union of countably many closed subsets of $X$. 
In accordance with~\cite{BKKMSU15}, we use the terminology ``pseudo-effective'' instead of ``weakly positive''. 
Since $f^* \mathcal G$ is pseudo-effective, $\mathbf B_-(f^* \mathcal G) \ne Y'.$ 
Extending the base field, we may assume that it is uncountable. 
Let $Z$ be an intersection of $r$ very general hyperplanes on $Y'$, 
where $r:=\dim Y'-\dim Y$. 
Then, $Z \not\subseteq \mathbf B_-(f^* \mathcal G)$ and 
$f|_Z:Z\to Y$ is a generically finite surjective morphism.
One can easily check that 
$\mathbf B_-((f^*\mathcal G)|_Z) \ne Z,$ 
so $(f^*\mathcal G)|_Z \cong (f|_Z)^*\mathcal G$ is pseudo-effective, 
and hence so is $\mathcal G$. 
Finally, we treat the general case. 
Let $V$ be the maximum open subset 
of $Y \setminus f(\mathrm{Supp}\,E)$ 
such that $\mathcal G|_V$ is locally free. 
Then $E|_{f^{-1}(V)}=0$, so 
$
\left( {f|_{f^{-1}(V)}} \right)^* (\mathcal G|_{V})
\cong 
(\mathcal O_{Y'}(E) \otimes f^*\mathcal G)|_{f^{-1}(V)} 
$ 
is pseudo-effective, and hence so is $\mathcal G|_V$. 
Since $\mathrm{codim}_{Y}(Y\setminus V) \ge 2$, 
we see that $\mathcal G$ is also pseudo-effective. 
\end{proof}
The next lemma follows directly from the definition of weak positivity. 
\begin{lem}[\textup{\cite[Lemma~2.16, c)]{Vie95}}] \label{lem:wp_gensurj}
Let $\tau:\mathcal F\to\mathcal G$ be a generically surjective morphism between coherent sheaves 
on a normal quasi-projective variety over a field. 
If $\mathcal F$ is weakly positive, then so is $\mathcal G$. 
\end{lem}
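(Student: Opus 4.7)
The plan is to verify the weak positivity of $\mathcal G$ directly from the definition, using the hypothesis on $\mathcal F$ to produce the required generically globally generated sheaf, and then transporting global generation along the map induced by $\tau$. Fix an ample Cartier divisor $H$ on the normal quasi-projective variety $Y$ and an arbitrary $\alpha\in\mathbb Z_{>0}$. By weak positivity of $\mathcal F$, there exists $\beta\in\mathbb Z_{>0}$ such that $(S^{\alpha\beta}\mathcal F)^{**}\otimes\mathcal O_Y(\beta H)$ is generically globally generated. I want to show that $(S^{\alpha\beta}\mathcal G)^{**}\otimes\mathcal O_Y(\beta H)$ is also generically globally generated; this is precisely the condition for $\mathcal G$ to be weakly positive.

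The first step is to observe that $\tau$ induces morphisms $S^{\alpha\beta}\tau\colon S^{\alpha\beta}\mathcal F\to S^{\alpha\beta}\mathcal G$, and that if $\tau$ is surjective at the generic point, so is $S^{\alpha\beta}\tau$. Taking double duals, I obtain a morphism $(S^{\alpha\beta}\mathcal F)^{**}\to(S^{\alpha\beta}\mathcal G)^{**}$ which agrees with $S^{\alpha\beta}\tau$ on the dense open subset $U\subseteq Y$ over which both symmetric powers are locally free; in particular, the double-dualised map remains surjective at the generic point of $Y$. Tensoring with the line bundle $\mathcal O_Y(\beta H)$ preserves generic surjectivity, producing a generically surjective morphism
$$\sigma\colon(S^{\alpha\beta}\mathcal F)^{**}\otimes\mathcal O_Y(\beta H)\longrightarrow(S^{\alpha\beta}\mathcal G)^{**}\otimes\mathcal O_Y(\beta H).$$

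The final step is a diagram chase. The morphism $\sigma$ induces a $k$-linear map on global sections, so I have a commutative square whose top row $H^0(Y,(S^{\alpha\beta}\mathcal F)^{**}\otimes\mathcal O_Y(\beta H))\otimes_k\mathcal O_Y\to(S^{\alpha\beta}\mathcal F)^{**}\otimes\mathcal O_Y(\beta H)$ is surjective at the generic point by choice of $\beta$, and whose right vertical arrow is $\sigma$, again surjective at the generic point. The composition along the top then down is therefore generically surjective, and it factors through $H^0(Y,(S^{\alpha\beta}\mathcal G)^{**}\otimes\mathcal O_Y(\beta H))\otimes_k\mathcal O_Y\to(S^{\alpha\beta}\mathcal G)^{**}\otimes\mathcal O_Y(\beta H)$, which consequently must also be surjective at the generic point. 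Since $\alpha$ was arbitrary, $\mathcal G$ is weakly positive.

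The only delicate point is the behaviour of double duals: one must check that generic surjectivity of $S^{\alpha\beta}\tau$ survives the reflexive hull. This is not a serious obstacle because normality of $Y$ ensures that $(-)^{**}$ is an isomorphism on the locus $U$ where the sheaves are already locally free, and generic surjectivity can be tested there. Aside from this minor bookkeeping, the argument is a routine application of the definition of weak positivity together with functoriality of symmetric powers and of $(-)^{**}$.
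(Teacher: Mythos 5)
Your argument is correct and is exactly the definitional unwinding the paper has in mind: it states that the lemma ``follows directly from the definition of weak positivity'' (citing Viehweg, Lemma 2.16 c)), and your chain --- generic surjectivity of $S^{\alpha\beta}\tau$, passing to double duals (harmless at the generic point), twisting by $\mathcal O_Y(\beta H)$, and transporting generic global generation through the induced map on $H^0$ --- is precisely that verification. No gaps; the handling of the reflexive hull at the generic point is the only point needing care, and you address it correctly.
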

\section{A version of weak positivity theorem}\label{section:versionsp}
In this section, we give a version of weak positivity theorem for the direct image sheaves of log pluricanonical bundles. 
This is a generalisation of \cite[Main Theorem]{CZ13} and \cite[Theorem 1.3 (1)]{Eji16}. 
In Sections \ref{section:main} and \ref{section:application}, 
we give applications of this weak positivity theorem.  
\begin{thm} \label{thm:sp} 
Let $f:X \to Y$ be a projective morphism between normal projective varieties 
over an algebraically closed field $k$ of characteristic zero $($resp. $p>0$$)$ 
such that $f_*\mathcal{O}_X=\mathcal{O}_Y$ and $K_Y$ is $\mathbb Q$-Cartier.
Let $\Delta=\Delta^{+} -\Delta^{-}$ be a $\mathbb Q$-Weil $($resp. $\mathbb Z_{(p)}$-Weil$)$ divisor 
with the decomposition by the effective $\mathbb{Q}$-divisors, and let $D$ be a $\mathbb Q$-Cartier divisor on $Y$. 
Suppose that $(Z,\Delta^{+}_Z)$ is lc $($resp. sharply $F$-pure$)$ for a general fibre $Z$ of $f$, 
where $K_Z+\Delta^{+}_Z=(K_X+\Delta^+)|_{Z}$, 
and suppose that $\mathrm{Supp}\,\Delta^{-}$ dose not dominate $Y$. 
Moreover assume that $-(K_X+\Delta+f^*D)$ is a nef $\mathbb Q$-Cartier $($resp. $\mathbb Z_{(p)}$-Cartier$)$ divisor. 
Fix an integer $l>0$ such that 
$l(K_X+\Delta)$ and $l(K_Y+D)$ are Cartier and $l\Delta^-$ is integral. 
Then, there exists an effective $f$-exceptional divisor $B$ on $X$ such that 
$$\mathcal O_X\big( l(-f^*(K_Y+D)+\Delta^{-}+B) \big)$$ 
is a weakly positive sheaf. 
Furthermore, this $B$ can be substituted by $0$ 
if $Y$ has only canonical singularities.
\end{thm}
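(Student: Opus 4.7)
The plan is to exploit the identity
\[ l\bigl(-f^*(K_Y+D)+\Delta^-\bigr) \;=\; l\bigl(K_{X/Y}+\Delta^+\bigr) + lN, \]
with $N := -(K_X+\Delta+f^*D)$ nef by hypothesis, together with the logarithmic weak positivity theorem for direct images of pluri-log-canonical sheaves \cite{Cam04,Eji15,Fuj17,Pat14}.

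First I would take compatible log resolutions $\pi\colon X'\to X$ and $\sigma\colon Y'\to Y$ and a morphism $f'\colon X'\to Y'$ lifting $f$, arranged so that $X',Y'$ are smooth, $f'$ is generically smooth, and all relevant divisors have simple normal crossings support. Let $\widetilde{\Delta^\pm}$ denote the birational transforms of $\Delta^\pm$, and pick $D'$ on $Y'$ so that $K_{Y'}+D'-\sigma^*(K_Y+D)$ is the $\sigma$-exceptional discrepancy, written as $E_Y^+-E_Y^-$ with $E_Y^\pm$ effective. The $f$-exceptional divisor $B$ on $X$ will arise as $\pi_*(f'^*E_Y^-)$; it is effective and $f$-exceptional because $f\circ\pi=\sigma\circ f'$ and $\sigma(\mathrm{Supp}\,E_Y^-)$ has codimension $\ge 2$ in $Y$.

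On the smooth model the general-fibre pair is still lc (resp.\ sharply $F$-pure), and the identity pulls back modulo exceptional errors. Applying the log weak positivity theorem in the form absorbing a nef twist by $l\pi^*N$ on the total space (which in characteristic zero follows from \cite{Cam04,Fuj17} and in positive characteristic from \cite{Eji15,Pat14}), together with the projection formula, gives weak positivity on $Y'$ of
\[ f'_*\,\O_{X'}\!\bigl(l(-f'^*(K_{Y'}+D')+\widetilde{\Delta^-})\bigr)\;\cong\;\O_{Y'}\bigl(-l(K_{Y'}+D')\bigr)\otimes f'_*\,\O_{X'}(l\widetilde{\Delta^-}). \]
By Lemma \ref{lem:wp_pullback}(1) its pullback is weakly positive on $X'$, and since $\widetilde{\Delta^-}$ does not dominate $Y'$ the evaluation map $f'^*f'_*\mathcal{F}\to\mathcal{F}$ with $\mathcal{F}=\O_{X'}(l(-f'^*(K_{Y'}+D')+\widetilde{\Delta^-}))$ is generically surjective --- restricted to a general fibre $Z'$ it is the inclusion of the constant section of $\O_{Z'}$. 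Lemma \ref{lem:wp_gensurj} then yields weak positivity of $\mathcal{F}$ itself on $X'$.

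Finally I would descend to $X$: the line bundle $\mathcal{F}$ on $X'$ differs from $\pi^*\O_X(l(-f^*(K_Y+D)+\Delta^-+B))$ by an additional divisor that, after unpacking the definition of $B$, is $\pi$-exceptional up to an effective part which can be added harmlessly, and Lemma \ref{lem:wp_pullback}(2) then yields the claimed weak positivity of $\O_X(l(-f^*(K_Y+D)+\Delta^-+B))$ on $X$. When $Y$ has canonical singularities $E_Y^-=0$, so no $f$-exceptional correction is forced, and with a careful choice of $\pi$ one can take $B=0$. The main obstacle is the middle step: the enhancement of the classical weak positivity theorem absorbing the nef twist $l\pi^*N$ is essential --- plain weak positivity of $f'_*\O_{X'}(l(K_{X'/Y'}+\widetilde{\Delta^+}))$ would not suffice --- and the characteristic-zero and characteristic-$p$ cases then rest on parallel but technically distinct technologies from the cited papers.
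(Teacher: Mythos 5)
Your central step is the real problem. The ``nef-twist'' weak positivity statement you invoke --- weak positivity of $f'_*\bigl(\O_{X'}(l(K_{X'/Y'}+\widetilde{\Delta^+}))\otimes\O_{X'}(l\pi^*N)\bigr)$ for an arbitrary nef $N$ on the total space --- is not available as a black box in \cite{Cam04,Fuj17,Eji15,Pat14}: those theorems concern direct images of log pluricanonical sheaves with effective boundary and no nef twist upstairs, and in characteristic $p$ they carry extra hypotheses on the geometric generic fibre. Establishing exactly this enhancement is the substance of the theorem, and the paper does it by a perturbation your sketch omits: fix $\alpha$, note that $N+\alpha^{-1}A$ is ample for $A$ ample, use \cite[Corollary 6.10]{SZ13} to choose an effective $\Gamma\sim_{\Q}N+\alpha^{-1}A$ with $(Z,\Delta^{+}_Z+\Gamma|_Z)$ still lc (resp.\ sharply $F$-pure), apply the standard weak positivity theorem to the boundary $\Delta^{+}+\Gamma$, and let the asymptotic ($\forall\alpha\,\exists\beta$) definition of weak positivity absorb the ample error $\alpha^{-1}A$. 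This is also where the lc/$F$-pure hypothesis on the fibres actually does work (your sketch only uses it to restate hypotheses on the resolution), and in characteristic $p$ one must in addition pass to the geometric generic fibre via \cite[Corollary 3.31]{PSZ13} and verify the hypothesis of \cite[Theorem 5.1]{Eji15} using $(K_X+\Delta^{+}+\Gamma)|_{X_{\overline\eta}}\sim_{\Q}\alpha^{-1}A|_{X_{\overline\eta}}$ --- which is precisely where the assumption that $\mathrm{Supp}\,\Delta^{-}$ does not dominate $Y$ is used; none of this appears in your proposal.

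Your reduction of the general case is also blocked in positive characteristic: the theorem is stated in char $p$ as well, where log resolutions of $X$ and $Y$ (and generic smoothness of $f'$) are not known, so your very first step is unavailable there. The paper avoids resolutions entirely: it first proves the statement when $f$ is equi-dimensional (where pulling back Weil divisors makes sense, so the canonical class of the new base need not be $\Q$-Cartier), then uses the flattening lemma \cite{AO00} plus normalisation --- valid in any characteristic --- to get an equi-dimensional $\tilde f:\tilde X\to\tilde Y$, takes a $\sigma$-exceptional $E\ge0$ with $-K_{\tilde Y}\le-\sigma^*K_Y+E$, and sets $B=\rho_*(\tilde f^*E)$, with $E=0$ (hence $B=0$) when $Y$ is canonical. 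Your smooth-model bookkeeping (``exceptional errors'', an effective part ``added harmlessly'') could likely be made precise in characteristic zero, but as written the descent step is not carried out and the characteristic-$p$ case does not go through by this route.
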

\begin{proof}
We first prove the assertion under the assumption that $f$ is equi-dimensional. 
Note that, in this case, the pull-back of a Weil divisor by $f$ is well-defined, 
and coincides with the usual one if the divisor is $\mathbb Q$-Cartier. 
For this reason, in this step, we need not assume that $K_Y$ is $\mathbb Q$-Cartier.
Set $\mathcal F:=\mathcal O_X\big( l(-f^*(K_Y+D)+\Delta^-) \big)$. 
Fix $0<\alpha\in\mathbb Z$ (resp. $0<\alpha\in\mathbb Z\setminus p\mathbb Z$) and an ample Cartier divisor $A$ on $X$. 
It suffice to show that there is some $\beta\in\mathbb Z_{>0}$ 
such that $\mathcal F^{[\alpha\beta]}\otimes\mathcal O_X(\beta lA)$ is weakly positive, 
where $\mathcal F^{[n]}:=(\mathcal F^{\otimes n})^{**}$. 
Since $-(K_X+\Delta+f^*D)+\alpha^{-1}A$ is an ample $\mathbb Q$-Cartier (resp. $\mathbb Z_{(p)}$-Cartier) divisor, 
by \cite[Corollary 6.10]{SZ13}, we can take a $\mathbb Q$-Cartier (resp. $\mathbb Z_{(p)}$-Cartier) divisor $\Gamma\ge0$ on $X$ 
such that $\Gamma\sim_{\mathbb Q}-(K_X+\Delta+f^*D)+\alpha^{-1}A$ and $(Z,\Delta^{+}_Z+\Gamma|_Z)$ is lc (resp. sharply $F$-pure). 
Note that, in characteristic $p>0$, 
we see that $(X_{\overline\eta},(\Delta^{+}+\Gamma)|_{X_{\overline\eta}})$ is also sharply $F$-pure by \cite[Corollary 3.31]{PSZ13}, 
where $X_{\overline\eta}$ is the geometric generic fibre of $f$. 
Applying \cite[Theorem~4.13]{Cam04} or \cite[Theorem~1.1]{Fuj13} (resp. \cite[Theorem~5.1]{Eji15}), we see that 
$$\bigl(f_*\mathcal O_X(lm(K_{X}+\Delta^{+}+\Gamma))\bigr)\otimes\mathcal O_Y(-lmK_Y)$$ 
is weakly positive for any sufficiently divisible $m \in\mathbb Z_{>0}$. 
Note that, in characteristic $p>0$, since we have 
$$
(K_X+\Delta^{+}+\Gamma)|_{X_{\overline\eta}}
\sim_{\mathbb Q}(\Delta^{-}+f^*D+\alpha^{-1}A)|_{X_{\overline\eta}}
\sim_{\mathbb Q}\alpha^{-1}A|_{X_{\overline\eta}},
$$ 
the hypothesis of \cite[Theorem~5.1]{Eji15} is satisfied as shown in \cite[Example~3.11]{Eji15}. 
Take an integer $\beta\gg0$ that is sufficiently divisible. 
Then, we see that $\mathcal F^{[\alpha\beta]}\otimes\mathcal O_X(\beta l A)$ is weakly positive,
combining Lemmas~\ref{lem:wp_pullback} and~\ref{lem:wp_gensurj} with the following sequence of generically surjective morphisms:
\begin{align*}
& f^*\biggl(\bigl(f_*\mathcal O_X(\alpha\beta l(K_X+\Delta^{+}+\Gamma))\bigr)\otimes\mathcal O_Y(-\alpha\beta lK_Y)\biggr) \\
\to & \mathcal O_X(\alpha\beta l(K_X+\Delta^{+}+\Gamma))\otimes f^*\mathcal O_Y(-\alpha\beta lK_Y)  
\hspace{57pt} \textup{\footnotesize by $f^*f_*(\underline{\quad})\to(\underline{\quad})$} \\
\cong & \mathcal O_X(\alpha\beta l(\Delta^{-}-f^*D+\alpha^{-1}A))\otimes f^*\mathcal O_Y(-\alpha\beta lK_Y)  
\hspace{33pt} \textup{\footnotesize by definition of $\Gamma$} \\
\textup{$
\to
$}
&
\textup{$
\mathcal O_X \big(\alpha\beta l(\Delta^{-}-f^*(K_Y+D)+\alpha^{-1}A) \big) 
$
\hspace{80pt} {\footnotesize $(\underline{\quad}) \to (\underline{\quad})^{**}$ } 
} \\
\textup{$
\cong 
$} 
&
\textup{$
\mathcal F^{[\alpha\beta]}\otimes\mathcal O_X(\beta l A) 
$} 
\hspace{193pt} \textup{\footnotesize by definition of $\mathcal F$.}
\end{align*}

Next, we consider the general case. 
By the flattening theorem \cite[3.3. The flattening lemma]{AO00}, 
we have a birational morphism $\sigma:\tilde Y \to Y$ from a normal projective variety $\tilde Y$ 
such that $\tilde f:\tilde X \to \tilde Y$ is equi-dimensional, 
where $\tilde f$ is the natural morphism from 
the normalisation $\tilde X$ of the main component of $\tilde Y \times_{Y} X$ 
to $\tilde Y$.
We have the following commutative diagram:
$$
\xymatrix@R=20pt@C=20pt{ \tilde X \ar[r]^{\rho} \ar[d]_{\tilde f} & X \ar[d]^{f} \\ 
\tilde Y \ar[r]^{\sigma} & Y.}
$$
Let $E \ge 0$ be a $\sigma$-exceptional divisor on $\tilde Y$ 
such that $-K_{\tilde Y} \le -\sigma^*K_Y +E$. 
Let $\tilde \Delta$ be a $\mathbb Q$-Weil divisor on $\tilde X$ 
such that $K_{\tilde X}+\tilde \Delta=\rho^*(K_X+\Delta)$. 
Then, $-(K_{\tilde X} + \tilde \Delta + {\tilde f}^*(\sigma^*D))$ is 
equal to the nef divisor $\rho^* \big( -(K_X +\Delta +f^*D) \big)$, 
so it follows from the previous step that 
$
\mathcal O_{\tilde X}\big( l(
-{\tilde f}^* (K_{\tilde Y} +\sigma^*D) +\tilde\Delta^{-} 
)\big)
$
is weakly positive. 
Here, $\tilde\Delta^-$ is defined by the natural decomposition $\tilde \Delta =\tilde \Delta^+ -\tilde\Delta^-$
by the effective $\mathbb Q$-divisors. 
Since 
$
-{\tilde f}^* (K_{\tilde Y}+\sigma^*D) 
\le -\rho^*f^*(K_Y+D) +{\tilde f}^*E, 
$
the sheaf
$
\mathcal O_{\tilde X}\big( l(
-\rho^*f^*(K_Y+D) +\tilde\Delta^{-} +{\tilde f}^*E
)\big)
$
is also weakly positive, and hence so is 
$
\mathcal O_X\big( l(
-f^*(K_Y+D) +\Delta^{-} +\rho_*({\tilde f}^*E)
)\big), 
$
where $\rho_*\tilde \Delta^{-}=\Delta^{-}$.  
Set $B:=\rho_*({\tilde f}^*E)$. Then $f_*B=\sigma_*E=0$. 
If $Y$ is canonical, we can put $E=0$, and then $B=0$. 
This completes the proof. 
\end{proof}
When $\Delta^-=0$, Lemma~\ref{lem:wp_pullback}~(2) tells us that $-(K_Y+D)$ is pseudo-effective. 
%Note that the weak positivity of a vector bundle $\mathcal E$ in the sense of Definition~\ref{defn:positivity} 
%is equivalent to the pseudo-effectivity in the sense of \cite[Definition~5.1]{BKKMSU15}, 
%and is equivalent to the usual pseudo-effectivity if $\mathcal E$ is a line bundle. 
%
In particular, one can recover \cite[the Main Theorem]{CZ13} and \cite[Theorem 1.3 (1)]{Eji16}. 
\begin{rem}\label{rem:cz}
(1) Our proof differs from that in \cite{CZ13}, even when $X$ and $Y$ are smooth and both $\Delta$ and $D$ are zero. 
We explain briefly the difference. 
In the proof of Theorem~\ref{thm:sp}, we use the natural morphism 
$$ f^*\biggl(\bigl(f_*\O_X(\alpha\beta(K_X+\Gamma))\bigr)\otimes\O_Y(-\alpha\beta K_Y)\biggr) 
\to \mathcal O_X(\beta(-\alpha f^*K_Y+A))$$
to derive the weak positivity of the target from that of the source. 
The proof in~\cite{CZ13} employs another morphism, which is induced by a trace map: 
\begin{align*}
&S^n\biggl(\bigl(f_*\mathcal O_X(\alpha\beta(K_X+\Gamma))\bigr) \otimes\omega_Y^{-\alpha\beta}\biggr)
\otimes
S^n\biggl(\bigl(f_*\mathcal O_X(\alpha\beta(K_X+\Gamma))\bigr)^*\otimes\mathcal O_Y(\beta L)\biggr) \\
&\to\mathcal O_Y(\beta n(-\alpha K_Y+L)).
\end{align*}
Here, $L$ is an ample divisor on $Y$, which is needed to get the weak positivity of the source. 

\noindent(2) Next, we give a supplement of the proof of \cite[Main Theorem]{CZ13}. 
Suppose for simplicity that $X$ and $Y$ are smooth and $D$ is zero in the notation of \cite[Main Theorem]{CZ13}. 
Then, $f'$ and $\pi:X'\to X$ in \cite{CZ13} are coincide with $f$ and $\mathrm{id}:X\to X$, respectively. 
Take $\delta,\epsilon\in\mathbb Q_{>0}$ with $\epsilon/\delta\ll1$. 
In \cite[Page~1855, Line~12 of Proof of the Main Theroem]{CZ13}, we could not get the global generation of 
$$
\O_Y(m\delta L)\otimes(f'_*\O_{X'}(\pi^*m\epsilon A))^*
$$ 
for any sufficiently divisible $m$. 
But, if we fix some $k$, we can get the global generation for any $m$ with $m \epsilon < k$. 
Thanks to this fact, we only need to make a small modification. 
For example, this is done as follows. 
Let $A$ be sufficiently ample. 
Considering Castelnuovo--Mumford regularity together with Fujita's vanishing theorem, 
we see that $A+N$ is free for any nef divisor $N$ on $X$. 
Fix $n\in\Z_{>0}$. 
Since $A-nK_X$ is free, there is an $E\in|A-nK_X|$ such that $(X,\frac{1}{n}E)$ is lc. 
Then, 
\begin{align*}
(f_*\O_X(A))\otimes \omega_Y^{-n}
\cong f_*\O_X(A-nf^*K_Y)
\cong f_*\O_X(nK_{X/Y}+E),
\end{align*}
so we find this sheaf is weakly positive by \cite{Cam04} or \cite{Fuj17}. 
% 
%As mentioned in \cite[Page~1855, Line~12 of Proof of the Main Theroem]{CZ13}, 
Take $m_0\in\Z_{>0}$ such that 
$
\O_Y(m_0L)\otimes(f_*\O_X(A))^*
$ 
is globally generated. 
Then, a non-trivial trace map
\begin{align*}
\big(\O_Y(m_0L)\otimes(f_*\O_X(A))^*\big)\otimes
\big((f_*\O_X(A))\otimes\omega_Y^{-n}\big)
\to \O_Y(m_0L)\otimes\o_Y^{-n}
\end{align*}
implies $-K_Y+\frac{m_0}{n}L$ is pseudo-effective, 
and hence so is $-K_Y=\underset{n\to\infty}{\mathrm{lim}}(-K_Y+\frac{m_0}{n}L)$. Therefore we complete the proof.
\end{rem}
\section{Injectivity for restrictions on  fibres}\label{section:main} %%%%%%%%%%%%%%%%%%%%%%%%%%%%%%%%%%%%%%
In this section, we discuss the restriction map of global sections of some line bundles 
(basically which are relative anti-log canonical bundles) 
under some semi-positivity assumptions. 
Throughout this section, we follow the notation below:
\begin{notation} \label{notation:main}
Let $f:X \dashrightarrow Y$ be an almost holomorphic dominant rational map between normal projective varieties 
over an algebraically closed field $k$ of characteristic zero $($resp. $p>0$$)$, 
i.e. $f$ is a surjective regular morphism over some non-empty open set of $Y$.
Let $\Delta$ be an effective $\mathbb Q$-Weil $($resp. $\mathbb Z_{(p)}$-Weil$)$ divisor on $X$.  
Let $Y_0$ be the maximal open set that $f$ is regular on and let $X_0=f^{-1}(Y_0)$.  
Suppose that the following conditions hold:
\begin{itemize}
\item[{\rm(1)}] $K_X+\Delta$ is $\Q$-Cartier;
\item[{\rm(2)}] a general fibre $F$ of $f$ is normal and $(F,\Delta_F)$ is klt $($resp. strongly $F$-regular$)$, 
where $K_F+\Delta_F=(K_X+\Delta)|_{F}$; 
\item[{\rm(3)}] $Y$ has only canonical singularities;
\item[{\rm(4)}] $(f|_{X_0})_*\mathcal{O}_{X_0}=\mathcal{O}_{Y_0}.$
\end{itemize}
Moreover let $\pi: W \to X $ be a resolution of the indeterminacy locus of $f$ with a normal variety $W$ and $g: W \to Y$ the induced morphism. 
\end{notation}
We now have the following diagram:
\begin{equation*}
\xymatrix{ &W \ar[dl]_{\pi} \ar[dr]^{g}\\
 X \ar@{-->}[rr]^{f}  & & Y.}
\end{equation*} 
The aim of this section is to prove the following theorem. 
\begin{thm} \label{thm:inj} 
With the notation above, let $Z:=f^{-1}(y)$ be a closed fibre of $f$ over a regular point $y\in Y_0$. 
Suppose further that the following conditions hold:
\begin{itemize}
\item[{\rm (i)}] $L:=-(\pi^*(K_{X}+\Delta)-g^*K_Y)$ is nef;
\item[{\rm (ii)}] $f$ is flat at every point in $Z$; 
\item[{\rm (iii)}] $\mathrm{Supp}\,\Delta$ does not contain $Z$; 
\item[{\rm (iv)}] $Z$ is normal. 
\end{itemize}
Then the support of any effective $\Q$-Cartier divisor $\Gamma$ on $W$ with $\Gamma\equiv L$ 
does not contain $\pi^{-1}(Z)$. 
\end{thm}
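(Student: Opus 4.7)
The argument is by contradiction: suppose there is an effective $\Q$-Cartier divisor $\Gamma$ on $W$ with $\Gamma \equiv L$ and $\pi^{-1}(Z) \subseteq \mathrm{Supp}\,\Gamma$. The goal is to run the strategy sketched in the introduction, namely perturbing the boundary by $\epsilon\Gamma$ and invoking the effective-divisor variant of the weak positivity theorem (Proposition~\ref{prop:p.e.} in the introduction) to force a contradiction at the regular point $y$.

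The first step is to bring the setup into the framework of Theorem~\ref{thm:sp} applied to $g:W \to Y$. After replacing $\pi:W\to X$ by a log resolution of $(X,\Delta+\pi_{*}\Gamma)$, I write
\[
K_W+\Delta_W=\pi^{*}(K_X+\Delta)+E^{+}-E^{-},
\]
with $\Delta_W$ the strict transform of $\Delta$ and $E^{+},E^{-}\ge 0$ both $\pi$-exceptional. The defining identity for $L$ then reads
\[
\Gamma+E^{-}\equiv -(K_W+\Delta_W-g^{*}K_Y)+E^{+},
\]
so $\Gamma':=\Gamma+E^{-}$ is an effective $\Q$-Cartier divisor on $W$ representing $-(K_{W/Y}+\Delta_W)+E^{+}$. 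This is precisely the shape needed to play the role of the effective representative in Proposition~\ref{prop:p.e.}, with $D=-K_Y$.

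The second step is to perturb and apply weak positivity. Choose a rational $0<\epsilon\ll 1$. A general fibre $F$ of $g$ does not meet $E^{\pm}$ and, being a general fibre, satisfies $F\not\subseteq\mathrm{Supp}\,\Gamma$; hence $(F,\Delta_F+\epsilon\Gamma|_F)$ remains klt (resp.\ strongly $F$-regular) by hypothesis. Applying Proposition~\ref{prop:p.e.} to $g:W\to Y$ with boundary $\Delta_W+\epsilon\Gamma'$ and $D=-K_Y$ produces an effective $g$-exceptional divisor $B$ on $W$ and a weakly positive direct image sheaf on $Y$; because $Y$ has canonical singularities and $y$ is regular, the last clause of Theorem~\ref{thm:sp} lets us take $B=0$ in a neighbourhood of $y$. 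Consequently we obtain, on an open neighbourhood of $y$, nonzero sections of the relevant line bundle built tautologically from multiples of $\Gamma'$.

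The final step is the contradiction at $y$. Because $y$ is regular, $f$ is flat at every point of $Z$, and $Z$ is normal, base change and cohomological flatness imply that the direct image sheaf above is locally free near $y$ and that its formation commutes with restriction to $y$. Hence the sections of the previous step restrict to nonzero sections on $\pi^{-1}(Z)$. On the other hand, by construction these sections come from the tautological section of $\O_W(m\Gamma')$ for some $m$, and the hypothesis $\pi^{-1}(Z)\subseteq\mathrm{Supp}\,\Gamma\subseteq\mathrm{Supp}\,\Gamma'$ forces them to vanish identically along $\pi^{-1}(Z)$. This contradiction proves the theorem. The main obstacle is the bookkeeping in the second step: one must check that after the perturbation by $\epsilon\Gamma'$ the hypotheses of Proposition~\ref{prop:p.e.} are still met on the general fibre, and that the exceptional corrections $B$ and $E^{\pm}$ can be arranged to stay away from $y$, so that the section-theoretic contradiction at $y$ is genuinely attributable to $\mathrm{Supp}\,\Gamma\supseteq\pi^{-1}(Z)$.
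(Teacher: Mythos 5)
Your opening moves (perturbing the boundary by a small multiple of $\Gamma$ and feeding the result into the weak positivity machinery of Theorem~\ref{thm:sp} via Proposition~\ref{prop:p.e.} with $D=-K_Y$) are in the spirit of the paper, but the proposal has a genuine gap exactly where the hypothesis $\pi^{-1}(Z)\subseteq\mathrm{Supp}\,\Gamma$ is supposed to yield a contradiction. First, Proposition~\ref{prop:p.e.} requires as input a $\Q$-Cartier divisor $E$ \emph{on $Y$} with $\Gamma\equiv L-g^*E$; you never produce such an $E$, and when $\dim Y\ge 2$ the fibre $g^{-1}(y)$ is not the pullback of any divisor on $Y$, so the condition ``$\Gamma$ contains $\pi^{-1}(Z)=g^{-1}(y)$'' cannot be encoded in the proposition as you invoke it. This is precisely why the paper blows up $Y$ at $y$ (and correspondingly $X$ along $Z$ and $W$ along $g^{-1}(y)$, using hypotheses (ii)--(iv) to keep the squares cartesian and $\tilde W,\tilde X$ normal): after the blow-up one has $B=\tilde g^*E$ with $E$ the exceptional divisor of $\tilde Y\to Y$ and $\tilde L=\varphi_W^*L$ still nef, the coefficient $b$ of $B$ in $\varphi_W^*\Gamma$ gives $\tilde L-\tilde g^*(bE)\equiv\varphi_W^*\Gamma-bB\ge0$, and the statement to prove becomes $b=0$.

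Second, your closing step is not justified. Weak positivity is only a \emph{generic} statement (generic global generation of $(S^{\alpha\beta}\mathcal G)^{**}\otimes\O_Y(\beta H)$), so it provides no sections, no nonvanishing, and no base-change statement at the particular point $y$ or along the particular closed fibre $Z$; moreover there is no identifiable direct image sheaf in your argument whose fibre at $y$ computes sections on $\pi^{-1}(Z)$, so the appeal to cohomological flatness has nothing concrete to attach to. The paper converts weak positivity into usable information by a different mechanism, Corollary~\ref{cor:exc}: from the weak positivity of $\O_X(-m\pi_*g^*E)$ it deduces, by intersecting with $(\pi^*A)^{\dim X-1}$ after passing to an alteration, that $\pi_*g^*E=0$, i.e.\ $g^*E$ is $\pi$-exceptional. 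Applied on the blow-up, $b>0$ would force $B$ to be $\tilde\pi$-exceptional, which is absurd because $B\cong Z\times E$ dominates the exceptional divisor $\tilde Z$ of $\tilde X\to X$. Without the blow-up reduction and this intersection-theoretic step, the contradiction you aim for does not go through. (A minor further point: your claim that a general fibre of $g$ does not meet the $\pi$-exceptional divisors $E^{\pm}$ is false in general, since $\pi$-exceptional divisors may dominate $Y$; fortunately it is also unnecessary, as one only needs that a general fibre is not contained in the supports involved.)
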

Before proving the theorem, let us consider the case of ruled surfaces. 
\begin{eg}\label{eg:ruled surf} 
Let $Y$ be a smooth projective curve and $D$ a Cartier divisor on $Y$ with $\deg D\le0$. 
Let $f:X:=\P(\O_Y\oplus\O_Y(D))\to Y$ be the projective bundle.
Set $C_0:=\P(\O_Y)\subset X$. 
As shown in  \cite[\S5.2]{Har77}, we have $C_0^2=\deg D$ and $-K_{X/Y}\sim 2C_0-f^*D$. 
Suppose now that $-K_{X/Y}$ is nef. 
Then $$0\le-K_{X/Y}\cdot C_0=2C_0^2-f^*D\cdot C_0=\deg D\le0,$$ 
and so $-K_{X/Y}\cdot C_0=C_0^2=0$.
Take an effective $\Q$-divisor $\Gamma$ on $X$ with $\Gamma\equiv -K_{X/Y}$. 
Since $\Gamma\cdot C_0=0$, we conclude that $\mathrm{Supp}\,\Gamma$ does not contain a fibre of $f$. 
\end{eg}
By applying our weak positivity theorem (Theorem~\ref{thm:sp}), 
we obtain the following proposition:

\begin{prop}\label{prop:p.e.} 
With the same notation as \ref{notation:main}, let $D$ and $E$ be $\Q$-Cartier divisors on $Y$. 
Suppose that the following conditions hold:
\begin{itemize}
\item[{\rm (a)}] $L:=-(\pi^*(K_{X}+\Delta) +g^*D)$ is nef;
\item[{\rm (b)}] there exists a $\mathbb Q$-Cartier divisor $\Gamma\ge0$ on $W$ with $\Gamma\equiv L-g^*E$. 
\end{itemize}
Then there exists some $\e\in\mathbb Q_{>0}$ such that 
$\O_X(-m\pi_*g^*(K_Y+D+\e E))$ is weakly positive for any sufficiently divisible $m\in\Z_{>0}$. 
\end{prop}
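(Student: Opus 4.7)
The plan is to apply Theorem~\ref{thm:sp} to the morphism $g : W \to Y$ with a boundary enlarged by a small multiple of $\Gamma$, and then descend the resulting weak positivity from $W$ down to $X$ via~$\pi$.

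After replacing $W$ by a log resolution if necessary, we may assume $W$ is smooth. Write $K_W + \Delta_W = \pi^*(K_X + \Delta)$ and decompose $\Delta_W = \Delta_W^+ - \Delta_W^-$ into effective $\Q$-divisors with disjoint support. Since $\Delta \geq 0$, the support of $\Delta_W^-$ is contained in the $\pi$-exceptional locus. For a general fibre $F_g$ of $g$ with image $F := \pi(F_g)$, adjunction gives $K_{F_g} + \Delta_W|_{F_g} = (\pi|_{F_g})^*(K_F + \Delta_F)$, and since $(F, \Delta_F)$ is klt (resp. strongly $F$-regular) by Notation~\ref{notation:main}~(2), the pair $(F_g, \Delta_W^+|_{F_g})$ is klt (resp. strongly $F$-regular). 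Moreover, $\Gamma|_{F_g}$ is a well-defined effective $\Q$-divisor on $F_g$ (its vertical components miss general fibres of $g$, and its horizontal components cannot contain them), so openness of the klt (resp. strong $F$-regularity) condition in the coefficients allows us to choose $\e \in \Q_{>0}$ sufficiently small that $(F_g, (\Delta_W^+ + \e\Gamma)|_{F_g})$ remains klt (resp. strongly $F$-regular).

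A direct computation using (b) then yields
$$-(K_W + \Delta_W + \e\Gamma + g^*(D + \e E)) = L - \e(\Gamma + g^*E) \equiv (1-\e)L,$$
which is nef by (a). We then apply Theorem~\ref{thm:sp} to $g : W \to Y$, with boundary $\Delta_W + \e\Gamma$ (positive part $\Delta_W^+ + \e\Gamma$, negative part $\Delta_W^-$, which is $\pi$-exceptional and thus does not dominate $Y$) and with divisor $D + \e E$ on $Y$. Here $K_Y$ is $\Q$-Cartier and the ``Furthermore'' clause applies (both because $Y$ has canonical singularities), while $g_*\O_W = \O_Y$ follows from condition~(4) of Notation~\ref{notation:main} together with the birationality of $\pi$. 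Hence for every sufficiently divisible $l \in \Z_{>0}$, the sheaf
$$\O_W\bigl(l(-g^*(K_Y + D + \e E) + \Delta_W^-)\bigr)$$
is weakly positive on~$W$.

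It remains to descend this weak positivity from $W$ to $X$. Weak positivity of a $\Q$-line bundle is equivalent to pseudo-effectivity of the associated $\Q$-Cartier divisor, so $-g^*(K_Y + D + \e E) + \Delta_W^-$ is pseudo-effective on~$W$. Since $\Delta_W^-$ is $\pi$-exceptional, $\pi_*\Delta_W^- = 0$, so pushing forward produces the $\Q$-Weil divisor $-\pi_*g^*(K_Y + D + \e E)$ on $X$. By a limit argument using ample divisors on $W$ of the form $\pi^*H_X + \eta H_W$ (for $H_X$ ample on $X$, $H_W$ ample on $W$) and letting $\eta \to 0$ inside the closed pseudo-effective cone, this push-forward is pseudo-effective as a $\Q$-Weil divisor class on $X$, equivalently $\O_X(-m\pi_*g^*(K_Y+D+\e E))$ is weakly positive for all sufficiently divisible $m$. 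The main technical obstacle will lie in this descent step, since $X$ need not be $\Q$-factorial and $\pi_*g^*(K_Y+D+\e E)$ need not be $\Q$-Cartier; a concrete alternative is to apply Lemma~\ref{lem:wp_pullback}~(2) to $\pi$ after identifying the weakly positive sheaf on $W$ with $\O_W(E') \otimes \pi^*\mathcal{G}$ for an effective $\pi$-exceptional divisor $E'$ and a suitable reflexive rank-one sheaf $\mathcal{G}$ on~$X$.
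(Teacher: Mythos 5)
Your overall skeleton is the paper's: perturb the crepant pullback by $\e\Gamma$, observe the new anti-log-canonical divisor is $\equiv(1-\e)L$ hence nef, apply Theorem~\ref{thm:sp} to $g$ with the canonical-singularities clause killing $B$, and descend along $\pi$. But the opening move ``replace $W$ by a log resolution'' introduces a genuine gap. Theorem~\ref{thm:sp} requires that the support of the negative part of the boundary not dominate $Y$, and your justification ``$\Delta_W^-$ is $\pi$-exceptional and thus does not dominate $Y$'' is a non sequitur: a $\pi$-exceptional divisor can perfectly well dominate $Y$ (blow up a multisection of $f$ lying in $X_0$, which has positive discrepancy and hence lands in $\Delta_W^-$). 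What actually saves the day in Notation~\ref{notation:main} is that $\pi$ resolves only the indeterminacy locus, so $\mathrm{Exc}(\pi)\subseteq\pi^{-1}(X\setminus X_0)=W\setminus g^{-1}(Y_0)$ maps into $Y\setminus Y_0$; this is destroyed once you pass to an arbitrary log resolution, which may modify $W$ over $X_0$. The same replacement also undermines your fibre statement: once positive-discrepancy exceptional divisors meet the general fibre $F_g$ of $g$, the adjunction $K_{F_g}+\Delta_W|_{F_g}=(\pi|_{F_g})^*(K_F+\Delta_F)$ only makes $(F_g,\Delta_W|_{F_g})$ sub-klt, and klt-ness of $(F_g,\Delta_W^+|_{F_g})$ (you have added the effective divisor $\Delta_W^-|_{F_g}$, whose coefficients can exceed $1$) does not follow as stated. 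Finally, in characteristic $p>0$, which the proposition covers, log resolutions are not available in general, so the reduction cannot even be made there. All of this disappears if you simply keep the normal $W$ of Notation~\ref{notation:main}, as the paper does: then the general fibre of $g$ is identified with $F$ via $\pi$, $\Delta_W^-$ misses it, and the hypotheses of Theorem~\ref{thm:sp} are immediate.

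The second soft spot is the descent from $W$ to $X$, which you yourself flag as ``the main technical obstacle''. Your primary route (pseudo-effectivity of a non-$\Q$-Cartier Weil class via limits of $\pi^*H_X+\eta H_W$) is not a proof, and the proposed alternative via Lemma~\ref{lem:wp_pullback}~(2) presupposes an isomorphism $\O_W\bigl(m(-g^*(K_Y+D+\e E)+\Delta_W^-)\bigr)\cong\O_W(E')\otimes\pi^*\mathcal G$ with $E'$ \emph{effective} and $\pi$-exceptional; since $\pi_*g^*(K_Y+D+\e E)$ need not be $\Q$-Cartier, $\pi^*\mathcal G$ is not a divisorial pullback, and neither the identification nor the effectivity of the comparison divisor is established -- this is essentially the point that needs proving. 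The paper instead pushes forward and takes the reflexive hull, $\bigl(\pi_*\O_W(m(-g^*(K_Y+D+\e E)+\Delta_W^-))\bigr)^{**}\cong\O_X(-m\pi_*g^*(K_Y+D+\e E))$, and uses that weak positivity, being a statement about generic global generation after twisting by an ample divisor, passes through a birational pushforward (global sections are preserved and nothing changes over the locus where $\pi$ is an isomorphism). Rewriting your last step along these lines, and dropping the log-resolution reduction, would bring the argument in line with the paper's proof.
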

\begin{proof}
We take effective $\mathbb{Q}$-divisors $\Delta_W$ and $G$ on $W$ such that 
they have no common components and 
$$
\pi^*(K_X+\Delta)+G=K_W+\Delta_W.
$$
Then $G$ is $\pi$-exceptional. 
For any $\e\in[0,1)\cap \mathbb Q$, we fix the notation as follows: 
$$\Delta_W^{(\e)}:=\Delta_W+\e\Gamma;~D^{(\e)}:=D+\e E;~L^{(\e)}:=-(K_W+\Delta_W^{(\e)}-G+g^*D^{(\e)}).$$ 
We then have 
\begin{align*}
L^{(\e)}&=-(K_W+\Delta_W+\e\Gamma-G+g^*(D+\e E)) \\
&=-(K_W+\Delta_W-G+g^*D)-\e(\Gamma+g^*E) \\
&\equiv L-\e L,
\end{align*}
so (a) indicates $L^{(\e)}$ is nef.
Take a general fibre $F$ of $f$. 
Note that now $F$ is also a general fibre of $g$.
Since $(F,\Delta_F)$ is klt (resp. strongly $F$-regular), so is $(F,\Delta^{(\e)}_F)$ for $\e\ll1$, 
where $K_F+\Delta^{(\e)}_F=(K_W+\Delta_W^{(\e)})|_{F}$. 
Fix such $\e$. 
Applying %Corollary~\ref{cor:p.e.} 
Theorem~\ref{thm:sp} to $g,W,Y,\Delta_W^{(\e)}-G$ and $D^{(\e)}$, 
we get that $\O_W\big(m(-g^*(K_Y+D^{(\e)})+G)\big)$ is weakly positive for any sufficiently divisible $m\in\Z_{>0}$.  
Thus, the sheaf 
$$
\big(\pi_*\O_W\big(m(-g^*(K_Y+D^{(\e)})+G)\big)\big)^{**}\cong\O_X(-m\pi_*g^*(K_Y+D^{(\e)}))
$$ 
is also weakly positive, which is our assertion. 
%we get that $-g^*(K_Y+D^{(\e)})+F$ is pseudo-effective.  Thus so is $-\pi_*(g^*(K_Y+D^{(\e)})+F)=-\pi_*g^*(K_Y+D^{(\e)})$, which is our assertion.
\end{proof}
The next corollary follows directly from the proposition.
\begin{cor}\label{cor:exc} 
With the same notation as \ref{notation:main}, suppose that $L:=-(\pi^*(K_{X}+\Delta)-g^*K_Y)$ is nef. 
Let $E$ be a nonzero effective $\mathbb Q$-Cartier divisor on $Y$. 
If $L-g^*E$ is numerically equivalent to an effective $\mathbb Q$-Cartier divisor on $W$, 
then $g^*E$ is $\pi$-exceptional. 
\end{cor}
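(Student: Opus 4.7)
The plan is to apply Proposition~\ref{prop:p.e.} with the choice $D := -K_Y$, which is a $\mathbb Q$-Cartier divisor because condition~(3) of Notation~\ref{notation:main} requires $K_Y$ to be $\mathbb Q$-Cartier. With this choice, $\pi^*(K_X+\Delta)+g^*D = \pi^*(K_X+\Delta)-g^*K_Y = -L$, so hypothesis (a) of the Proposition reduces to the nefness of $L$ assumed in the corollary, and hypothesis (b) is precisely the given existence of an effective $\mathbb Q$-Cartier divisor on $W$ numerically equivalent to $L - g^*E$. The Proposition therefore yields some $\e \in \mathbb Q_{>0}$ for which
$$\O_X\bigl(-m\,\pi_*g^*(K_Y+D+\e E)\bigr) \;=\; \O_X(-m\e\,\pi_*g^*E)$$
is weakly positive whenever $m \in \mathbb Z_{>0}$ is sufficiently divisible, the equality holding because $K_Y+D = 0$.

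Set $N := \e\,\pi_*g^*E$, an effective $\mathbb Q$-Weil divisor on $X$; the remaining task is to show $N = 0$, which is equivalent to $\pi_*g^*E = 0$, i.e. to the $\pi$-exceptionality of the effective divisor $g^*E$. Fix an ample Cartier divisor $H$ on $X$ and set $n := \dim X$. Using the identification $(S^{\alpha\beta}\O_X(-mN))^{**} \cong \O_X(-\alpha\beta m N)$ of reflexive rank-one sheaves, weak positivity yields, for each $\alpha \in \mathbb Z_{>0}$, some $\beta \in \mathbb Z_{>0}$ together with a non-zero global section of $\O_X(-\alpha\beta m N + \beta H)$, hence an effective Weil divisor $D_\alpha$ linearly equivalent to $-\alpha\beta m N + \beta H$.

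Intersecting the relation $D_\alpha + \alpha\beta m N \sim \beta H$ with $H^{n-1}$ yields $\alpha m\,(N \cdot H^{n-1}) \le H^n$ for arbitrarily large $\alpha$, which forces $N \cdot H^{n-1} \le 0$. Writing $N = \sum a_i N_i$ as a sum of distinct prime components with $a_i \ge 0$, the ampleness of $H$ gives $N_i \cdot H^{n-1} > 0$ for every $i$, so all $a_i$ must vanish and $N = 0$. The only substantive step is the initial specialisation of Proposition~\ref{prop:p.e.} to $D = -K_Y$; the intersection-theoretic endgame is essentially automatic once the definition of weak positivity is unwound into the existence of sections of $\O_X(-\alpha\beta m N + \beta H)$.
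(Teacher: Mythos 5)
Your proof is correct, and its first half is exactly the paper's: both arguments specialise Proposition~\ref{prop:p.e.} to $D=-K_Y$ (legitimate since condition (3) of Notation~\ref{notation:main} makes $K_Y$ $\Q$-Cartier) and reduce the corollary to showing that weak positivity of $\O_X(-m\e\,\pi_*g^*E)$ forces $\pi_*g^*E=0$. Where you diverge is the endgame. The paper keeps the computation on $W$: it sets $P=mg^*E$, $M=-\alpha P+\pi^*A$ (which is $\Q$-Cartier on $W$), notes that $\pi_*M$ is $\Q$-linearly equivalent to an effective $\Q$-Weil divisor on $X$, and then passes to a regular alteration $\rho\colon\tilde W\to W$ (via \cite{deJ96}) to turn that pushforward information into the inequality $M\cdot(\pi^*A)^{\dim X-1}\ge0$, after adding a $\tilde\pi$-exceptional correction $B$ supported over $X\setminus X_{\mathrm{sm}}$; letting $\alpha\to\infty$ and using the projection formula gives $\pi_*P\cdot A^{\dim X-1}=0$. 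You instead work directly on the (possibly singular) $X$: you unwind weak positivity into a nonzero section of the reflexive sheaf $\O_X(-\alpha\beta mN+\beta H)$, hence an effective Weil divisor $D_\alpha\sim-\alpha\beta mN+\beta H$, and intersect with $H^{n-1}$ as a cycle-level degree. This is valid: degrees of Weil divisor cycles against powers of an ample Cartier class are well defined on a projective variety, are invariant under linear (indeed rational) equivalence of cycles, and are strictly positive on nonzero effective cycles, which is all you use; the identification $(S^{\alpha\beta}\O_X(-mN))^{**}\otimes\O_X(\beta H)\cong\O_X(-\alpha\beta mN+\beta H)$ holds because both sides are reflexive and agree in codimension one. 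So your route trades the paper's alteration argument for standard cycle-theoretic intersection theory on singular normal varieties; it is somewhat more elementary and shorter, while the paper's version only ever intersects $\Q$-Cartier divisors, at the price of invoking de Jong's alterations. Both are complete proofs.
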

\begin{proof}
Fix an ample Cartier divisor $A$ on $X$.
Applying Proposition~\ref{prop:p.e.} with $D=-K_Y$, 
we see that $\mathcal O_X(-m\pi_*g^*E )$ is weakly positive, 
for some sufficiently divisible $m\in\mathbb Z_{>0}$. 
Set $P:=mg^*E \ge0$. We show $\pi_*P=0$. 
Fix $\alpha \in \mathbb Z_{>0}$. 
Put $M:=-\alpha P +\pi^*A$. 
The weak positivity of $\mathcal O_X(-\pi_* P)$ implies that 
$
\pi_*M
= -\alpha \pi_*P +A 
\sim_{\mathbb Q} \Gamma \ge 0
$
for a $\mathbb Q$-Weil divisor $\Gamma$ on $X$. 
Thanks to \cite{deJ96}, 
we have a regular alteration $\rho:\tilde W \to W$ of $W$, 
i.e. a generically finite morphism 
from a smooth projective variety $\tilde W$ to $W$. 
Set $\tilde \pi:\tilde W\xrightarrow{\rho} W \xrightarrow{\pi} X$
and $U:=\tilde \pi^{-1}(X_{\mathrm{sm}}) \subseteq \tilde W$. 
Since 
$
(\rho^*M) |_{U}
\sim_{\mathbb Q}
\left( \tilde \pi|_{U} \right)^* \left( \Gamma|_{X_{\mathrm{sm}}} \right)
\ge 0,  
$
there is a divisor $B\ge0$ supported on $\tilde W \setminus U$ such that 
$ \rho^*M +B $
is $\mathbb Q$-linearly equivalent to an effective $\mathbb Q$-divisor on $\tilde W$. 
Note that $\tilde \pi_* B=0$.
Then 
$$
M \cdot (\pi^* A)^{\dim X-1}
=\frac{(\rho^*M) \cdot (\tilde \pi^* A)^{\dim X-1}}{\deg \rho}
=\frac{(\rho^*M +B) \cdot (\tilde \pi^* A)^{\dim X-1}}{\deg \rho}
\ge 0, 
$$
and so 
$
-\alpha P \cdot (\pi^*A)^{\dim X-1} +(\pi^*A)^{\dim X} 
\ge 0. 
$ 
This means $P \cdot (\pi^*A)^{\dim X-1}=0$, 
since $\alpha$ can be any positive integer. Hence $\pi_*P=0$ as desired.
\end{proof}
Using this corollary, we now prove the main theorem. 
We start with a brief explanation of the strategy. 
Suppose for simplicity that $\dim Y=1$. Then, we can regard $y\in Y_0$ as a divisor. 
Let $\Gamma$ be a $\mathbb Q$-Cartier divisor on $X$ with $0\le \Gamma\equiv L$. 
Let $b\ge0$ be the coefficient of $B:=g^*y$ in $\Gamma$. 
We then have $L-g^*(ay)\equiv\Gamma-bB\ge0$. 
The assertion is equivalent to saying that $b=0$. 
If $b>0$, then Corollary~\ref{cor:exc} implies $B$ is $\pi$-exceptional, a contradiction. 

In the case when $\dim Y\ge 2$, we need to consider the blowing-up of $Y$ at $y$. 
\begin{proof}[Proof of Theorem~\ref{thm:inj}.]
Let $\tilde W$ (resp. $\tilde X$ and $\tilde Y$) be the blowing-up of $W$ (resp. $X$ and $Y$) along $g^{-1}(y)$ (resp. $Z$ and $y$), 
let $\varphi_W$ (resp. $\varphi_X$ and $\varphi_Y$) be the natural projection, 
and let $B$ (resp. $\tilde Z$ and $E$) denote its exceptional locus. 
We now have the following commutative diagram: 
\begin{align*}
\xymatrix@R=25pt@C=25pt{ 
\tilde Z \ar@{_(->}[d]_{} 
& B \ar[l]_{\cong} \ar@{_(->}[d]^{} \ar[r]^{} & E \ar@{_(->}[d]^{} \\
\tilde X \ar[d]_{\varphi_X} & \tilde W \ar[d]^{\varphi_W} \ar[l]_{\tilde \pi} \ar[r]^{\tilde g} & \tilde Y \ar[d]^{\varphi_Y} \\ 
X & W \ar[l]_{\pi} \ar[r]^{g} & Y. }
\end{align*}
Since $\pi$ and $g$ are flat at every point in $g^{-1}(y)$,
each square in the above diagram is cartesian. 
Then $B={\tilde g}^*E$ as divisors and $B\cong g^{-1}(y)\times E\cong Z\times E$. 
Now, we see that $\tilde W$ is normal, because $\tilde g$ is a flat morphism with normal fibres in a neighbourhood of $B$. 
Hence $\tilde X$ is also normal. 
Set $\tilde \Delta:={\varphi_X}^{-1}_*\Delta$ and  
$\tilde L:=-{\tilde \pi}^*(K_{\tilde X}+\tilde\Delta)+{\tilde g}^*K_{\tilde Y}$. 
Then $\tilde L$ is nef. 
Indeed, one can easily check that 
$$
\tilde \pi ^*(K_{\tilde X}+\tilde \Delta) -{\tilde g}^*K_{\tilde Y}
={\varphi_W}^*(\pi^*(K_X+\Delta)-g^*K_Y), 
$$
which means 
$
\tilde L={\varphi_W}^*L. 
$
Let $\Gamma$ be a $\mathbb Q$-Cartier divisor on $W$ with $0\le\Gamma\equiv L$, 
and let $b\ge0$ be the coefficient of $B$ in ${\varphi_W}^*\Gamma$. 
Then $\tilde L-{\tilde g}^*(bE) \equiv {\varphi_W}^*\Gamma-bB\ge0$. 
Our claim is equivalent to saying that $b=0$. 
If $b>0$, then Corollary~\ref{cor:exc} implies $B$ is $\tilde\pi$-exceptional, a contradiction. 
\end{proof}
We can use Theorem \ref{thm:inj} to study the section rings of nef relative anti-canonical divisors. 
\begin{defn}
Let $D$ be a $\Q$-Weil divisor on a normal variety. 
The \textit{section ring of $D$} is given by 
$$R(X,D):=\bigoplus_{m\ge0}H^0(X,\lfloor mD\rfloor).$$
\end{defn}
\begin{cor}\label{cor:anti_can_ring} 
In the same situation as in Theorem \ref{thm:inj}, 
the ring homomorphism 
$$R(X,L)\to R(Z,L|_Z)$$ 
induced by restriction is injective. 
\end{cor}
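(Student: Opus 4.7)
My plan is to reduce the injectivity of the graded ring homomorphism to degree-by-degree injectivity of the restriction maps, and then invoke Theorem~\ref{thm:inj} to rule out any nonzero section vanishing on $Z$ in each degree.

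Fix $m \in \Z_{>0}$ and let $s \in H^0(X, \O_X(\lfloor mL \rfloor))$ be a nonzero section. Since $Z \subseteq X_0$ and $f$ is flat at every point of $Z$ by hypothesis (ii) of Theorem~\ref{thm:inj}, the birational morphism $\pi$ can be arranged to be an isomorphism over a neighbourhood of $Z$, so I identify $s$ with a nonzero section on $W$ and obtain the isomorphism $\pi^{-1}(Z) \cong Z$. Writing $\{mL\} := mL - \lfloor mL \rfloor$ for the fractional part, I form the $\Q$-divisor
$$\Gamma := \tfrac{1}{m}\bigl(\mathrm{div}(s) + \lfloor mL \rfloor\bigr) + \tfrac{1}{m}\{mL\}$$
on $W$. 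A direct computation gives $\Gamma - L = \tfrac{1}{m}\mathrm{div}(s)$, so $\Gamma \sim_\Q L$; in particular $\Gamma \equiv L$, and $\Gamma$ is $\Q$-Cartier because $L$ is. Both summands defining $\Gamma$ are effective (the first since $s$ is a section of $\O_X(\lfloor mL \rfloor)$, the second by definition of the fractional part), so $\Gamma \geq 0$, and $\Gamma$ satisfies the hypotheses of Theorem~\ref{thm:inj}.

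Theorem~\ref{thm:inj} then yields $\mathrm{Supp}(\Gamma) \not\supseteq \pi^{-1}(Z)$. If, on the contrary, $s|_Z = 0$, then via the isomorphism $\pi^{-1}(Z) \cong Z$ the section $s$ vanishes identically on $\pi^{-1}(Z)$; since $Z$ is normal, hence reduced, by hypothesis (iv), this forces $\pi^{-1}(Z) \subseteq \mathrm{Supp}\bigl(\mathrm{div}(s) + \lfloor mL \rfloor\bigr) \subseteq \mathrm{Supp}(\Gamma)$, a contradiction. Therefore $s|_Z \neq 0$, yielding injectivity in degree $m$ and hence of the graded ring homomorphism.

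The only subtlety is the correct inclusion of the fractional part $\tfrac{1}{m}\{mL\}$ so that $\Gamma$ is $\Q$-linearly equivalent to $L$ itself (rather than to $\tfrac{1}{m}\lfloor mL \rfloor$), together with the translation between the scheme-theoretic vanishing $s|_Z = 0$ and the set-theoretic containment $\pi^{-1}(Z) \subseteq \mathrm{Supp}(\mathrm{div}(s)+\lfloor mL\rfloor)$; neither point presents a real obstacle, so the corollary is a direct consequence of Theorem~\ref{thm:inj}.
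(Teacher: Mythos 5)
Your construction is, at heart, the paper's argument: turn a section vanishing on $Z$ into an effective $\Q$-Cartier divisor numerically equivalent to $L$ whose support would contain $\pi^{-1}(Z)$, and contradict Theorem~\ref{thm:inj}. The difference is the normalisation. The paper replaces $s$ by a power $s^d$ with $dm$ sufficiently divisible, so that $dmL$ is an \emph{integral Cartier} divisor and $s^d$ is a section of an honest line bundle, then concludes $s^d=0$ (hence $s=0$, the section ring of an integral variety being a domain). You stay in degree $m$, restore the $\Q$-linear equivalence by adding the fractional part $\frac1m\{mL\}$, and get $\Q$-Cartierness from $\Gamma=L+\frac1m\mathrm{div}(s)$; those steps are correct and are a reasonable substitute for the rounding part of the power trick.

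The gap is in the step you dismissed as presenting no real obstacle: the implication ``$s|_Z=0$ $\Rightarrow$ $\pi^{-1}(Z)\subseteq\mathrm{Supp}(\mathrm{div}(s)+\lfloor mL\rfloor)$''. This is immediate wherever $\O_X(\lfloor mL\rfloor)$ is invertible along $Z$, since there $s$ is locally a regular function whose zero set is exactly the support of $\mathrm{div}(s)+\lfloor mL\rfloor$; normality/reducedness of $Z$ is not the issue. But $\lfloor mL\rfloor$ is only a Weil divisor (only $L$ is $\Q$-Cartier, and rounding destroys that), its non-Cartier locus is merely closed of codimension $\ge 2$, while $Z=f^{-1}(y)$ has codimension $\dim Y$, which may be $\ge 2$; nothing in Notation~\ref{notation:main} or hypotheses (i)--(iv) prevents $Z$ from lying entirely inside the non-Cartier locus of $\lfloor mL\rfloor$ (for instance inside $\mathrm{Sing}\,X$). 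In that situation the deduction is no longer automatic and depends on how the restriction map is defined. This is precisely what the paper's passage to $s^d$ buys: after powering, one deals with the line bundle $\O_X(dmL)$, for which ``the restriction vanishes'' literally means ``the zero divisor contains $Z$''. So either incorporate the powering step, or supply an argument that your containment survives when $\lfloor mL\rfloor$ is nowhere Cartier along $Z$. (A minor further point: it is not flatness of $f$ along $Z$ but the inclusion $Z\subseteq X_0$, where $f$ is already a morphism, that lets you identify a neighbourhood of $\pi^{-1}(Z)$ in $W$ with one of $Z$ in $X$; the paper makes the same identification implicitly.)
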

\begin{proof}
Take an $m\in\Z_{>0}$ and an $s\in H^0(X,\lfloor mL\rfloor)$ with $s|_Z=0$. 
Then $s^d|_Z=(s|_Z)^d=0$ for some $d\in\Z_{>0}$ sufficiently divisible. 
Since $s^d\in H^0(X,-dmL)$, Theorem~\ref{thm:inj} implies $s^d=0$, and so $s=0$. 
\end{proof}
\section{Main theorems}\label{section:application} %%%%%%%%%%%%%%%%%%%%%%%%%%%%%%%%%%%%%%
In this section, we discuss applications of Theorem \ref{thm:inj}. 
Throughout this section, we use Notation \ref{notation:main} and suppose that $L:=-\pi^*(K_{X}+\Delta)+g^*K_Y$ is nef.
Take a general fibre $F$ of $f$. 
Corollary~\ref{cor:anti_can_ring} implies that
$$\kappa(X,L)\le\kappa(F,L|_F).$$
Combining this with the inequality 
$$\kappa(F,L|_F)\le\dim F=\dim X-\dim Y,$$
we obtain the following:
\begin{cor}\label{cor:inequality} 
In this situation, the inequality 
$$\kappa(X,L)\le\dim X-\dim Y$$ holds true. 
In particular, if $L$ is nef and big, then $\dim Y=0$. 
\end{cor}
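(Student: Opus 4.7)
The plan is to deduce the corollary directly from Corollary~\ref{cor:anti_can_ring} by combining it with the elementary upper bound on Iitaka--Kodaira dimension by fibre dimension.

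First, I would verify that a general closed fibre $Z = f^{-1}(y)$ with $y \in Y_0$ satisfies the four hypotheses of Theorem~\ref{thm:inj}: (i) holds by the running assumption that $L$ is nef; (ii) $f$ is flat at every point of $Z$ for generic $y$ by generic flatness; (iii) $\mathrm{Supp}\,\Delta$ does not contain $Z$, since $\Delta$ has only finitely many components, each of which either dominates $Y$ (and thus meets a general fibre in codimension $\ge 1$) or projects into a proper closed subset of $Y$ (and is therefore disjoint from the fibre over a general $y$); and (iv) $Z$ is normal by condition (2) of Notation~\ref{notation:main}. Corollary~\ref{cor:anti_can_ring} then yields an injection $R(X,L) \hookrightarrow R(Z,L|_Z)$ of graded section rings, which in each homogeneous degree gives the inequality $\kappa(X,L) \le \kappa(Z,L|_Z)$.

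Next, I would invoke the standard estimate $\kappa(Z,L|_Z) \le \dim Z = \dim X - \dim Y$, valid for any $\Q$-Weil divisor on a projective variety. Chaining it with the previous inequality yields the desired bound $\kappa(X,L) \le \dim X - \dim Y$.

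Finally, for the ``in particular'' clause: if $L$ is nef and big, then $\kappa(X,L) = \dim X$ by the definition of bigness, so the inequality forces $\dim X \le \dim X - \dim Y$, i.e.\ $\dim Y = 0$. I do not anticipate any substantive obstacle at this stage, since all the deep inputs --- the weak positivity theorem (Theorem~\ref{thm:sp}), the exceptionality statement (Corollary~\ref{cor:exc}), and the injectivity theorem (Theorem~\ref{thm:inj}) together with its consequence Corollary~\ref{cor:anti_can_ring} --- have already been established; this corollary is a clean repackaging of those results.
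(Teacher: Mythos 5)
Your proof is correct and follows essentially the same route as the paper: the authors likewise apply Corollary~\ref{cor:anti_can_ring} to a general fibre $F$ to get $\kappa(X,L)\le\kappa(F,L|_F)$, combine this with $\kappa(F,L|_F)\le\dim F=\dim X-\dim Y$, and deduce $\dim Y=0$ when $L$ is nef and big. Your extra verification that a general fibre satisfies hypotheses (i)--(iv) of Theorem~\ref{thm:inj} is a detail the paper leaves implicit, not a different argument.
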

The next corollary gives an affirmative answer to  a question of Hacon--Mckernan:
\begin{cor}\label{cor:hm-inequality}
Let $(X,\Delta)$ be a projective klt pair over a field in characteristic zero 
and $r: X \dashrightarrow W$ the maximal rationally chain connected fibration.
Suppose that $-(K_X+\Delta)$ is nef. 
Then $\kappa(X, -(K_X+\Delta)) \leq \kappa (F, -(K_F+\Delta_F))$. 
Here, $F$ is a general fibre of $r$ and $K_F+\Delta_F=(K_X+\Delta)|_{F}$. 
In particular, it holds that 
$$ \kappa(X, -(K_X+\Delta)) \leq \mathrm{dim}\,X-\mathrm{dim}\,W.
$$
\end{cor}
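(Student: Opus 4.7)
The plan is to deduce the statement from Corollary~\ref{cor:anti_can_ring} applied to a smooth model of the MRC fibration, after arranging that the base satisfies $K_W \sim_{\Q} 0$. Writing the divisor $L$ of Notation~\ref{notation:main} as $-\pi^*(K_X+\Delta)+g^*K_W$, this triviality gives $L \sim_{\Q} -\pi^*(K_X+\Delta)$, so the Iitaka dimensions on source and fibre appearing in Corollary~\ref{cor:anti_can_ring} become exactly the two sides of the desired inequality. To set up, I would take a resolution $\pi: V \to X$ of the indeterminacy of $r$ from a smooth projective $V$, with induced morphism $g: V \to W$, and replace $W$ by a resolution of singularities so that it is smooth (and hence canonical). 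Writing $K_V + \Delta_V = \pi^*(K_X+\Delta)$, the klt-ness of $(X,\Delta)$ restricted to a general fibre $F$ of $r$ ensures that $(F', \Delta_V^+|_{F'})$ is klt for a general fibre $F'$ of $g$, while $\Delta_V^-$ is supported on $\pi$-exceptional divisors and does not dominate $W$; the condition $(r|_{X_0})_*\mathcal O_{X_0}=\mathcal O_{W_0}$ holds because the general fibre of $r$ is geometrically integral.

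The crucial step is $K_W \sim_{\Q} 0$. Since $W$ is the MRC base, by Graber--Harris--Mazur--Starr the variety $W$ is not uniruled, so by Boucksom--Demailly--P\v{a}un--Peternell $K_W$ is pseudo-effective. Conversely, $-(K_V+\Delta_V)=-\pi^*(K_X+\Delta)$ is nef, so Theorem~\ref{thm:sp} applied to $g:V\to W$ with $D=0$ (taking $B=0$ as $W$ is canonical), combined with Lemma~\ref{lem:wp_pullback}(2) as in the remark following Theorem~\ref{thm:sp}, shows that $-K_W$ is pseudo-effective. Hence $K_W\equiv 0$, i.e.\ $K_W$ is nef with numerical dimension zero. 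By Kawamata--Nakayama's abundance theorem for $\nu=0$, $\kappa(W,K_W)=0$, and any effective divisor numerically equivalent to a positive multiple of $K_W$ must vanish (testing against an ample class), so $K_W\sim_{\Q} 0$.

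With $K_W\sim_{\Q} 0$, the divisor $L$ of Notation~\ref{notation:main} is nef and satisfies $\kappa(V,L)=\kappa(X,-(K_X+\Delta))$ by birational invariance. Choosing a sufficiently general regular point $y\in W$ and setting $Z:=g^{-1}(y)$, so that $\pi|_Z:Z\to F$ is birational onto the general fibre $F$ of $r$, Corollary~\ref{cor:anti_can_ring} yields
$$
\kappa(X,-(K_X+\Delta)) \;=\; \kappa(V,L) \;\le\; \kappa(Z,L|_Z) \;=\; \kappa(F,-(K_F+\Delta_F)),
$$
which is the asserted inequality; the second inequality of the statement then follows from $\kappa(F,-(K_F+\Delta_F)) \le \dim F = \dim X - \dim W$. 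The main obstacle is the step $K_W \sim_{\Q} 0$: one must combine the non-uniruledness of the MRC base, Theorem~\ref{thm:sp}, and an abundance-type result for numerical dimension zero, all while ensuring that the singularities and the auxiliary divisor $\Delta_V$ introduced by the birational modifications still satisfy the hypotheses (lc or klt of $(F',\Delta_V^+|_{F'})$, non-dominance of $\Delta_V^-$) needed to apply Theorem~\ref{thm:sp}.
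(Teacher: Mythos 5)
Your overall frame agrees with the paper's: reduce to the situation of Notation~\ref{notation:main} with $K_W\sim_{\Q}0$, so that $L\sim_{\Q}-\pi^*(K_X+\Delta)$, and then conclude via Corollary~\ref{cor:anti_can_ring} (equivalently Corollary~\ref{cor:inequality}). The problem is the middle step, where you claim that Theorem~\ref{thm:sp} applied to a resolution $g\colon V\to W$ of the MRC fibration gives pseudo-effectivity of $-K_W$ on your chosen smooth model. This has a genuine gap, in two places. First, writing $K_V+\Delta_V=\pi^*(K_X+\Delta)$, the negative part $\Delta_V^-$ consists of the positive-discrepancy $\pi$-exceptional divisors; if you insist on a smooth $V$, these can lie over the singular locus of $X$ inside $X_0$, and such divisors can perfectly well dominate $W$, so the hypothesis of Theorem~\ref{thm:sp} that $\mathrm{Supp}\,\Delta^-$ does not dominate the base is not automatic (it is only automatic if you take the normalised graph, which is then not smooth). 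Second, and more seriously, even when $\Delta_V^-$ is vertical, the conclusion of Theorem~\ref{thm:sp} is the weak positivity of $\O_V\bigl(l(-g^*K_W+\Delta_V^-)\bigr)$, and Lemma~\ref{lem:wp_pullback}~(2) only lets you strip off a divisor whose image in $W$ has codimension at least two; components of $\Delta_V^-$ lying over the indeterminacy locus or over $\mathrm{Sing}\,X$ can map onto divisors of $W$ (over a curve base they always do, since any non-dominant image is a point), and then weak positivity of $-g^*K_W+\Delta_V^-$ does not imply $-K_W$ pseudo-effective. Indeed the intermediate claim itself is false on a badly chosen model: if the MRC base is birational to, say, an abelian variety and you take $W$ to be its blow-up at a point, $r\colon X\dashrightarrow W$ is still an almost holomorphic representative of the MRC fibration, but $-K_W\equiv -E$ is not pseudo-effective. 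So $K_W\equiv0$ cannot be proved ``on an arbitrary smooth model''; one must either work with a birationally invariant quantity or pass to a special model.

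This is exactly where the paper takes a different, and shorter, route: after noting via Hacon--M\textsuperscript cKernan that $r$ is the MRC fibration and via Graber--Harris--Starr that $W$ is not uniruled, it quotes Zhang's result that $\kappa_\sigma(K_W)=0$ (a birational invariant), and then uses Druel and Nakayama to replace $W$ by a good minimal model, i.e.\ a $\Q$-factorial terminal (hence canonical, as required by Notation~\ref{notation:main}~(3)) variety with $K_W\sim_{\Q}0$, before invoking Corollary~\ref{cor:inequality}. Note also that the paper's own machinery (Proposition~\ref{prop:p.e.}, Corollary~\ref{cor:exc}, Theorem~\ref{thm:inj}) is designed precisely to avoid your problematic deduction: there one pushes the weakly positive sheaf forward to $X$, where the exceptional divisor $G$ disappears, instead of trying to conclude positivity of $-K_W$ on the base. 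If you repair your argument by citing Zhang (or by carrying out an MMP on $W$ as in the paper) to arrange $K_W\sim_{\Q}0$ on a canonical model, the remaining steps of your proposal — birational invariance of $\kappa$, identification of the general fibre of $g$ with $F$, and the application of Corollary~\ref{cor:anti_can_ring} — do match the paper's proof.
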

\begin{proof}By \cite[Corollary 1.5]{HM07}, $r $ is the maximal rational connected fibration.  First  we may assume that $W$ is smooth since $r$ is a rational map. \cite[Corollary 1.4]{ghs} implies that $W$ is not uniruled.   By \cite[Remark 1]{Zha05}, we see that the numerical Kodaira dimension  $\kappa_{\sigma}(K_W)$ is zero. Thus we have a good minimal model of $W$ by \cite{druel} and \cite[V, 4.9. Corollary]{nak}. Thus we may assume that $W$ has only $\mathbb {Q}$-factorial terminal singularities and $K_W \sim_{\mathbb{Q}}0$. Thus the desired inequality follows from Corollary \ref{cor:inequality}.
\end{proof}
\section{Questions} \label{section:questions}
Surprisingly, it is proved in \cite{CH17} that 
the maximal rationally connected fibration of a projective manifold with nef anti-canonical bundle 
can be taken as a regular morphism. 
In view of this result, it seems to be natural to ask the following question:
\begin{ques}\label{intro:regularity}
Let $(X,\Delta)$ be a projective (even compact K\"ahler) klt pair over the complex number field such that $-(K_X+\Delta)$ is nef, 
and let $r: X \dashrightarrow W$ be the maximal rationally chain connected fibration. 
Can $r$ be represented by a regular morphism?
\end{ques}
We also should ask for the K\"ahler case of Theorem \ref{thm-intro:hm-inequality}:
\begin{ques}\label{intro:kahler}
Let $(X,\Delta)$ be a compact K\"ahler  klt pair
and $r: X \dashrightarrow W$ the maximal rationally chain connected fibration.
Suppose that $-(K_X+\Delta)$ is nef. Does the inequality 
$$
\kappa(X, -(K_X+\Delta)) \leq \mathrm{dim}\,X-\mathrm{dim}\,W 
$$
hold true? 
\end{ques}
\bibliographystyle{abbrv}
\bibliography{ref_all}
\end{document}